\def\be{\begin{equation}}
\def\ee{\end{equation}}
\def\beq{\begin{eqnarray}}
\def\eeq{\end{eqnarray}}
\def\beqs{\begin{eqnarray*}}
\def\eeqs{\end{eqnarray*}}
\def\ea{\end{array}}
\def\ea{\end{array}}
\def\ds{\displaystyle}
\def \AAA{{\mathcal A}_{\alpha}^*}
\def \AA{{\mathcal A}_{\alpha}}
\def\11{{\rm 1~\hspace{-1.5ex}1} }
\def\NN{\mathbb{N}}
\def\ZZ{\mathbb{Z}}
\def\CC{\mathbb{C}}
\newcommand{\rfb}[1]{\mbox{\rm
   (\ref{#1})}\ifx\undefined\stillediting\else:\fbox{$#1$}\fi}
\def\section{\@startsection {section}{1}{\z@}{-3.5ex plus -1ex minus
    -.2ex}{2.3ex plus .2ex}{\large\bf}}
\def\DD {{\mathcal D}}
\font\eufm=eufm10\font\eufms=eufm10\font\eufmss=eufm10\newfam\eufam
\newtheorem{theorem}{Theorem}[section]
\newtheorem{lemma}[theorem]{Lemma}
\newtheorem{corollary}[theorem]{Corollary}
\newtheorem{remark}[theorem]{Remark}
\newtheorem{proposition}[theorem]{Proposition}
\begin{document}
\thispagestyle{empty}
\title[Wave operator on the tadpole graph]{Spectral analysis and best decay rate of the wave propagator on the tadpole graph}

\author{Ka\"{\i}s Ammari}
\address{LR Analysis and Control of PDEs, LR22ES03, Department of Mathematics, Faculty of Sciences of
Monastir, University of Monastir, 5019 Monastir, Tunisia}
\email{kais.ammari@fsm.rnu.tn}

\author{Rachid Assel}
\address{LR Analysis and Control of PDEs, LR22ES03, Department of Mathematics, Faculty of Sciences of
Monastir, University of Monastir, 5019 Monastir, Tunisia}
\email{rachid.assel@fsm.rnu.tn}

\author{Mouez Dimassi}
\address{Universit\'e Bordeaux, CNRS, UMR 5251 IMB, 351, Cours de la Libration, 33405 Talence Cedex, France } 
\email{mdimassi@u-bordeaux.fr}

\date{}

\begin{abstract} 
We consider the damped wave semigroup
on the tadpole graph ${\mathcal R}$. 
We first give a meticulous spectral analysis, followed by a judicious decomposition of the resolvent's kernel.
%We first give a careful spectral analysis and %an appropriate decomposition of the kernel of %the resolvent. 
 As a consequence,  and by showing that the generalized eigenfunctions form a Riesz basis of some subspace of the energy space $\mathcal{H}$, 
we establish the exponential decay of the corresponding energy, with the optimal decay rate dictated by the spectral abscissa of the relevant operator.

%we prove that the corresponding energy decays %exponentially where the best decay rate is %given by the spectral abscissa of the %corresponding operator.  
\end{abstract}

\subjclass[2010]{34B45, 47A60, 34L25, 35B20, 35B40}
\keywords{Spectral analysis, exponential stability, best decay rate, dissipative wave operator, tadpole graph}

\maketitle

\tableofcontents
%\vfill\break

\section{Introduction} \label{formulare}

%%%%%%%%%%%%%%%%%%

Over the past few years, there has been a burgeoning interest in various physical models encompassing multi-link flexible structures. These structures comprise either finitely or infinitely many interconnected flexible elements, such as strings, beams, plates, and shells. A wealth of literature has emerged on this subject, underscoring its significance. Notable references include \cite{BK,amregmer} and \cite{nojaetall:15, cacc, kost}, along with the extensive bibliographies provided therein. The spectral analysis of these intricate structures holds promise for addressing control or stabilization issues, as discussed in works such as \cite{ammarinicaise}.

\begin{center}\label{fig1} 
\includegraphics[scale=0.80]{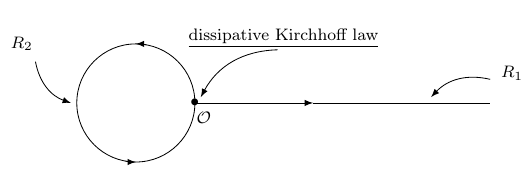}
\captionof{figure}{Tadpole graph}
\end{center}

A tadpole graph is a term commonly used in mathematics and graph theory. It refers to a specific type of graph that resembles the shape of a tadpole, hence the name.  In a tadpole graph, you have a main cycle (the body of the tadpole) with additional nodes (the tail) attached to it. A tadpole graph is a  quantum graph with a simple geometry, which has been extensively researched in the past decade (see \cite{amamnic1} and references therein) and has numerous practical applications.  Dispersive effects for the  free Schr\"odinger equation on the tadpole
was studied in \cite{amamnic1}. The case of the damped 
 Schr\"odinger semigroup was examined in \cite{amaass1}. For the nonlinear Schr\"odinger equation on the tadpole we refer to  \cite{cacc,AD1,AD2} and the references given there.

The damped wave equation was intensively studied in the case of compact graph, where mostly controllability, observability and stabilization of the problem are studied, see e.g. \cite{DZ,AS}. Very little was known about spectral properties of the wave equation on non-compact graphs (see \cite{AJK,AG1,AG2}).  To our best knowledge the damped wave equation  on a tadpole graph (or any other partial differential equation  on  non compact graphs with cycles) has not been investigated.

\medskip

In the present study, we consider the damped wave equation on a non-compact tadpole graph (with Kirchhoff condition  at the single vertex incorporating a damping
term $-\alpha u_t$ see \eqref{wsystem}). We conduct a meticulous spectral analysis of the dissipative wave operator acting on the tadpole graph. The main result of the paper describes the spectrum of the generator, $\AA$, and the asymptotic behavior of $e^{t\AA}$ as $t\rightarrow +\infty$. First, we prove that the spectrum of $\AA$ depends on two exceptional values $\alpha=1, 3.$
For $\alpha \in ]0, 1]\cup \{3\}$, the spectrum coincides with the imaginary axis $i\mathbb R$ with a sequence of embedded eigenvalues,  as in the damped free case $\alpha=0$. However, for $\alpha\in ]1, 3[\cup ]3, +\infty[$ a discrete spectrum  arises in the left half complex plan. %which is sometimes referred to as the lasso graph.  
Second, we explore an application of our analysis by investigating the best decay rate of the corresponding stabilization problem.  

%Before a precise statement of our main result,let us introduce some notations which will be used throughout the rest of the paper.

%\medskip

\section{Preliminaries and notations}
In  this section  we review background regarding Sobolev spaces, we introduce the model  and  state our main results.
%establish notation.

Let $R_i,i=1,2,$ be two disjoint sets
identified with a closed path of measure equal to $L > 0$ for $R_2$ and to $(0,+\infty),$
for $R_1$, see Figure \ref{fig1}.   Here, we normalize the interval for the circle to $[0, L]$ with the end points connected to the half-line $[0, +\infty[$ at a single vertex.

We set ${\mathcal R} := \ds \cup_{k=1}^2
\overline{R}_k$. We denote by $f = (f_k)_{k=1,2} =
(f_1,f_2)$ the functions on ${\mathcal R}$ taking their values in
$\CC$ and let $f_k$ be the restriction of $f$ to $R_k$.

%Let $I$ be an interval of $\mathbb R$. 
We use three different kinds of Sobolev
spaces on  $R_i$.  For $m\in \mathbb N$, we denote by $H^m(R_i)$ the Sobolev space of functions in $L^2(R_i)$
whose partial derivatives up to $m$-th order are in $L^2(R_i)$. We denote by $\dot{H}^m(R_i)$  the homogeneous Sobolev space of order $m$, that is the set 
of functions $u\in L^2_{{\rm loc}}(R_i)$ such that $u^{(m)}\in L^2(R_i)$\footnote{Since $\Vert u^{(m)}\Vert_{L^2(R_i)}=0$ holds for any polynomial of degree $\leq m-1$, we need to consider the quotient of ${\dot H}^m$ by polynomial of degree $\leq m-1$ with the same norm to make it an Hilbert space.}. Finally,  for $m\not =0$,  ${\widehat H}^m$ denotes the  quotient space $\dot H^m/{\mathcal P}_{m-1}$, where $\mathcal P_m$ is the set of polynomial of degree $\leq m$.  
We introduce the standard inner product in $H^m$ and  $ {\widehat H}^m$ :
$$\langle f,g\rangle_{H^m( R_i)}=\sum_{l=0}^m\langle f^{(l)}, g^{(l)}\rangle_{L^2(R_i)},\,\,\,\,\, \langle f,g\rangle_{ \widehat {H}^m( R_i)}=\langle f^{(m)}, g^{(m)}\rangle_{L^2(R_i)}, \, i=1,2.$$
Next let us recall the  Hilbert  Sobolev spaces on $\mathcal R$ :
%Let us introduce  the Hilbert spaces

$$
L^2 (\mathcal{R}) =\oplus_{k=1}^2L^2(R_k),$$

$$ H^1(\mathcal{R)} = \left\{f=(f_k)_{k=1,2}\in \oplus_{k=1}^2 H^1(R_k);  f_1(0)=f_2(0)=f_2(L)\right\},$$
%\footnote{ Ici il faut faire un petit commentaire sur cet espace de Hilbert  dont la norme n'est pas standard et qui n'est pas equivalent a la norme standard sur $ H^1[0,+\infty[)$.}
$$ \dot{H}^1(\mathcal{R)} = \left\{f=(f_k)_{k=1,2}\in \oplus_{k=1}^2 \dot{H}^1(R_k);  f_1(0)=f_2(0)=f_2(L)\right\},$$
$$ {\widehat H}^1(\mathcal R)=\dot H^1(\mathcal R)/{\mathcal P}_0,$$

and

$$
\mathcal{H} := {\widehat H}^1(\mathcal{R}) \times L^2 (\mathcal{R}).
$$
These Hilbert spaces are endowed with their natural   inner products : 
$$\langle w,v\rangle_{H^1(\mathcal R)}=\sum_{k=1}^2\Big(\langle w_k', v_k'\rangle_{L^2(R_k)}+\langle w_k, v_k\rangle_{L^2(R_k)}\Big),$$
$$\langle u,v\rangle_{L^2(\mathcal R)}=\sum_{k=1}^2\langle u_k, v_k\rangle_{L^2(R_k)},\,\,\,\,\,\,\,\,\,\,\,\,  \langle f,g\rangle_{{\widehat H}^1(\mathcal R)}=\sum_{k=1}^2\langle f_k', g_k'\rangle_{L^2(R_k)},$$
and
$$\langle (f,u), (g,v)\rangle_{\mathcal H}= \sum_{k=1}^2 \Big\{\langle f_k', g_k'\rangle+\langle u_k, v_k\rangle \Big\}.$$
%for all  $u, v\in L^2(\mathcal R)$ and all $f,g\in {\widehat H}^1(\mathcal R)$.

 %From now on, for $u=(u_1,u_2)\in H^1(\mathcal R), \, {\widehat H}^1(\mathcal{R})$ we denote by $u(0)$  the common value  $u_k(0)$.

 Now, let us introduce our model.  
Fix $\alpha\in \mathbb R$. For $(u, v)\in \mathcal H$, with $v_k \in C^0(\overline {R_k})$ and $u_k\in C^1(\overline{R_k})$,  we  introduce the following transmission condition
%\begin{equation}\tag{A1}\label{A1}
%v_1(0)=v_2(0)=v_2(L),
%\end{equation}
\begin{equation}\tag{A$_{\alpha}$}\label{A1}
  \sum_{k=1}^2
\frac{du_k}{dx}(0^+) - \frac{du_2}{dx}(L^-)= \alpha v_1 (0).
\end{equation}
%with inner product$$((u^1_k)_{k=1,2},(v^1_k)_{k=1,2}),((u^2_k)_{k=1,2},(v^2_k)_{k=1,2}))_{\mathcal H} = \ds \sum_{k=1}^2 \left\{\left(\frac{du^1_k}{dx},\frac{du^2_k}{dx}\right)_{L^2(R_k)} + (v^1_k,v^2_k)_{L^2(R_k)} \right\}.$$

\medskip
In this work, we are concerned with  the unbounded linear operator, $\AA$, of  ${\mathcal H}$ with domain
%Let $\mathcal{A}_\alpha : {\mathcal D}(\mathcal{A}_\alpha) \subset {\mathcal H} \rightarrow {\mathcal H}$ be the linear operator on${\mathcal H}$ 
$$
{\mathcal D}(\mathcal{A}_\alpha) := \left\{(u,v) \in \mathcal H:\,  u, v \in {\widehat H}^1(\mathcal{R}), \,  u_k\in \dot
{H}^2(R_k) \text { and   satisfies } (A_\alpha)
\right\},
$$
defined by
 $$
\mathcal{A}_\alpha 
:= 
\left(
\begin{array}{cc}
0 & I \\
\partial^2 & 0
\end{array}
\right).
$$
Now, we consider the wave equation in the tadpole, given by the following system:  
\be
\label{wsystem}
\left\{
\begin{array}{lll}
u_i^{\prime \prime} (x,t) - \partial^2_x u_i(x,t) = 0, \, R_i \times (0,+\infty), i=1,2, \\
 \ds \sum_{k=1}^2
\partial_x u_k (0^+,t) - \partial_x u_2 (L^-,t)= \alpha u^\prime_1 (0,t), \, t > 0,\\
u_i^0(x,0) =u_i^0(x) , u_i^1(x,0) = u_i^1(x), \, R_i, i=1,2.
\end{array}
\right.
\ee
So, system (\ref{wsystem}) can be rewritten as an abstract Cauchy problem: 

\begin{equation} 
\left\{
\begin{array}{l}
U^{\prime} (t) = \mathcal{A}_\alpha U(t), \, t >0,\\
U(0)= U^{0},
\end{array}
\right.
\label{pbfirstorderch}
\end{equation}
where $U(t) := (u(t),u^\prime(t)), U^0 := (u^0,u^1).$

\medskip

We will prove according to Theorem \ref{th1}, that the operator $\mathcal{A}_\alpha$ generates a $C_0$ semigroup of contractions on $\mathcal{H}$, denoted by $(e^{t\mathcal{A}_\alpha})_{t \geq 0}$ and consequently the system (\ref{pbfirstorderch}) is well-posed and the regular solutions satisfy the following energy identity:
\be
\label{energid}
\left\|(u(t),u^\prime(t))\right\|^2_{\mathcal{H}} - \left\|(u_0,u_1)\right\|^2_{\mathcal{H}} = - 2 \alpha \, \int_0^t \left|u^\prime_1(0,s)\right|^2 \, ds, \,\forall \, t  \geq 0. 
\ee

We can now state our first main theorem.% which we prove in Section \ref{SPe}.
\begin{theorem}\label{Intr}
For $\alpha\geq 0$, the spectrum of $\AA$  is given by $\sigma(\AA)=i\mathbb R\cup \Sigma_{p,\alpha}$, and consists of the absolutely continuous spectrum 
$\sigma_{\rm ac}(\AA)=i\mathbb R$, 
 a sequence of embedded eigenvalues $\Sigma_p=\{z_n:=\frac{2\pi n}{L} i; n\in \mathbb Z^*\}$, and  {\rm (}for $\alpha \in ]1, +\infty[$ with $\alpha\not=3${\rm )} a sequence of  discrete eigenvalues, $\Sigma_{p, \alpha}\subset \mathbb C^-$ {\rm (}see Theorem \ref{th1} {\rm )}.

\end{theorem}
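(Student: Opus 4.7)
The plan is to analyze $\sigma(\mathcal A_\alpha)$ by combining three ingredients: dissipativity to localize the spectrum in the closed left half-plane, explicit ODE analysis on each branch producing a factored secular equation for the point spectrum, and singular Weyl sequences on $R_1$ paired with an explicit resolvent construction for the complementary region. First, since Theorem \ref{th1} gives that $\mathcal A_\alpha$ generates a contraction semigroup for $\alpha\ge 0$, one has $\sigma(\mathcal A_\alpha)\subset\{\mathrm{Re}\,\lambda\le 0\}$. To find the point spectrum I would solve $(\mathcal A_\alpha-\lambda)(u,v)=0$, which yields $v=\lambda u$ and $u_k''=\lambda^2 u_k$ on each $R_k$. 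For $\mathrm{Re}\,\lambda<0$, the admissibility $u_1\in\widehat H^1(R_1)$ forces $u_1(x)=Ae^{\lambda x}$, while on the loop one writes $u_2(x)=C\cosh(\lambda x)+D\sinh(\lambda x)$. Imposing the continuity $u_1(0)=u_2(0)=u_2(L)$ together with $(A_\alpha)$ and then using $\cosh(\lambda L)-1=2\sinh^2(\lambda L/2)$ and $\sinh(\lambda L)=2\sinh(\lambda L/2)\cosh(\lambda L/2)$ produces the factored secular equation
\[
2\sinh(\lambda L/2)\bigl[(\alpha-1)\cosh(\lambda L/2)+2\sinh(\lambda L/2)\bigr]=0.
\]

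The first factor yields the embedded sequence $\Sigma_p=\{2\pi n i/L:n\in\mathbb Z^*\}$. For such purely imaginary $\lambda=i\omega$ any solution on $R_1$ would be a plane wave, which does not lie in $\widehat H^1(R_1)$, so $u_1\equiv 0$ is forced; then $u_2(0)=u_2(L)=0$ together with $u_2'(0)=u_2'(L)$ (from $(A_\alpha)$, since $v_1(0)=0$) selects $u_2(x)\propto\sin(2\pi n x/L)$, confirming both the eigenvalue list and its eigenfunctions. The second factor reduces to $\tanh(\lambda L/2)=(1-\alpha)/2$, whose roots are $\lambda=\tfrac{1}{L}\log\tfrac{3-\alpha}{1+\alpha}+\tfrac{2\pi k i}{L}$, $k\in\mathbb Z$. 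A case analysis in $\alpha$ then matches Theorem \ref{th1}: for $\alpha\in(0,1]$ the logarithm is nonnegative so all candidates have $\mathrm{Re}\,\lambda\ge 0$ and fail admissibility (at $\alpha=1$ they collapse onto the embedded family); for $\alpha\in(1,3)$ one obtains a discrete sequence in $\mathbb C^-$; at $\alpha=3$ the argument of the logarithm vanishes so no finite eigenvalue arises; and for $\alpha>3$ the complex branch of the log gives $\lambda=\tfrac{1}{L}\log\tfrac{\alpha-3}{1+\alpha}+\tfrac{(2k+1)\pi i}{L}\in\mathbb C^-$. These pieces assemble the discrete set $\Sigma_{p,\alpha}$.

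To establish $i\mathbb R\subset\sigma(\mathcal A_\alpha)$ I would construct, for each $\omega\in\mathbb R^*$, a singular Weyl sequence localized on $R_1$ far from the vertex: pick $\chi_n\in C_c^\infty(n,2n)$ with $\chi_n=1$ on $(n+1,2n-1)$ and uniformly bounded derivatives, and set $u_n(x)=\chi_n(x)e^{i\omega x}$, $v_n=i\omega u_n$ on $R_1$, zero on $R_2$. The supports avoid the vertex so all transmission conditions hold trivially; the residual $(\mathcal A_\alpha-i\omega)(u_n,v_n)=(0,u_n''+\omega^2 u_n)$ stays bounded in $\mathcal H$ (it only involves $\chi_n'$ and $\chi_n''$), while $\|(u_n,v_n)\|_{\mathcal H}\sim\sqrt{n}\to\infty$; the case $\omega=0$ is handled analogously with a linear profile. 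Conversely, for $\lambda\notin i\mathbb R\cup\Sigma_{p,\alpha}$ I would build $(\mathcal A_\alpha-\lambda)^{-1}$ by solving $(\partial_x^2-\lambda^2)u=\lambda f+g$ on each branch via Green's functions and determining the remaining constants from a linear system whose determinant is exactly the secular quantity above; nonvanishing of this determinant yields boundedness of the resolvent. Finally, $\sigma_{ac}(\mathcal A_\alpha)=i\mathbb R$ follows from a limiting-absorption analysis on $i\mathbb R$ (supported by the explicit resolvent-kernel decomposition announced in the abstract), together with the fact that the damping is a finite-rank, hence relatively compact, perturbation of the conservative $\mathcal A_0$, so Weyl's theorem preserves the essential spectrum.

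The main obstacle is the rigorous identification $\sigma_{ac}(\mathcal A_\alpha)=i\mathbb R$: for the non self-adjoint dissipative $\mathcal A_\alpha$ the absolutely continuous part has to be extracted from the boundary values of the resolvent as $\mathrm{Re}\,\lambda\to 0^-$, which relies on the detailed kernel decomposition developed in the subsequent sections. By contrast the point-spectrum calculation is a direct ODE computation, and $i\mathbb R\subset\sigma$ via Weyl sequences only uses that the damping is localized at the vertex.
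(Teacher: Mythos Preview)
Your proposal is correct and follows essentially the same route as the paper: singular Weyl sequences supported far out on $R_1$ for $i\mathbb R\subset\sigma(\mathcal A_\alpha)$, an explicit ODE computation producing the factored secular equation (your hyperbolic form $2\sinh(\lambda L/2)[(\alpha-1)\cosh(\lambda L/2)+2\sinh(\lambda L/2)]=0$ is exactly the paper's $(e^{zL}-1)((\alpha+1)e^{zL}-(3-\alpha))=0$ up to the nonvanishing factor $e^{zL}$), and an explicit Green-function resolvent construction on $\mathbb C^-\setminus\Sigma_{p,\alpha}$; the paper handles $\mathbb C^+$ via Lax--Milgram rather than invoking the contraction-semigroup property directly, but this is a cosmetic difference. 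One caution on your last paragraph: writing the damping as a finite-rank perturbation and applying Weyl's theorem yields invariance of the \emph{essential} spectrum, which for a non-selfadjoint dissipative operator does not by itself identify $\sigma_{\mathrm{ac}}$; the paper in fact does not supply a separate proof of $\sigma_{\mathrm{ac}}(\mathcal A_\alpha)=i\mathbb R$ beyond establishing $\sigma(\mathcal A_\alpha)=i\mathbb R\cup\Sigma_{p,\alpha}$ and pointing to the explicit resolvent-kernel structure, so your acknowledged obstacle is genuinely the one loose end in both treatments.
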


The second main results of this paper pertain to the exponential behavior of $(e^{t\mathcal{A}_\alpha})_{t \geq 0}$, which is obtained by using a Riesz basis method. These results are outlined and demonstrated in Section \ref{asybasis} (see Theorem \ref{rieszbasis}) and Section \ref{estenerg} (see Theorem \ref{energy}), respectively.

\begin{theorem} \label{energyen} {\rm (}Riesz basis and energy estimate{\rm)} 
We assume that $\alpha \in ]1,3[ \cup ]3,+\infty[$. Let $\mathcal{H}_{p,\alpha}$ (respectively $\mathcal{H}_p$) be the subspace of $\mathcal{H}$ associated to $\Sigma_{p,\alpha}$ (resp. associated to $\Sigma_p$). 

\begin{enumerate}
\item 
The generalized eigenfunctions of the operator $\mathcal{A}_\alpha$ associated to the eigenvalues in $\Sigma_{p,\alpha}, \alpha \in ]0,1[ \cup ]3,+\infty[,$ form a Riesz basis of  $\mathcal{H}_{p,\alpha}$. 

\item $\mathcal{H}_{p,\alpha}$ is orthogonal    to $\mathcal{H}_p$. 
\item Let $(u_0,u_1)$ in $\mathcal{H}_{p,\alpha} \oplus \mathcal{H}_p$ be the initial condition of the boundary value problem (\ref{pbfirstorderch}) and $(u_0^p,u_1^p)$ its orthogonal projection onto $\mathcal{H}_p$. Then $(u(t),u^\prime(t))$ solution of (\ref{wsystem}) decreases exponentially, in $\mathcal{H}$, to 
$(u_0^p,u_1^p)$  
when $t$ tends to $+\infty$ with decay rate given by the spectral abscissa $\ds \sup_{\lambda \in \Sigma_{p,\alpha}} \Re(\lambda) < 0.$
\end{enumerate}
\end{theorem}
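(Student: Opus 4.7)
Building on Theorem \ref{th1}, I expect each $\lambda\in\Sigma_{p,\alpha}$ to satisfy $e^{\lambda L}=\tfrac{3-\alpha}{1+\alpha}=:\beta$, giving a simple arithmetic progression $\lambda_n=c+i\mu_n$ with $c=L^{-1}\log|\beta|<0$ independent of $n$ and $\mu_n\in\tfrac{2\pi}{L}\mathbb Z$ (shifted by $\pi/L$ when $\alpha>3$). Solving $u''=\lambda^2 u$ on each edge and imposing $u\in\widehat H^1$ on the half-line yields the explicit eigenfunction $e_n=(\phi_n,\lambda_n\phi_n)$ with $\phi_{n,1}(x)=(1+\beta)e^{\lambda_n x}$ on $R_1$ and $\phi_{n,2}(x)=e^{\lambda_n x}+\beta e^{-\lambda_n x}$ on $R_2$; simplicity of $\lambda_n$ as a root of $e^{\lambda L}=\beta$ rules out Jordan chains, so generalized and ordinary eigenfunctions coincide. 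After normalization in $\mathcal H$ (the norms grow like $|\lambda_n|$), the eigenfunctions reduce to a fixed $x$-envelope times the Fourier mode $e^{i\mu_n x}$ and are quadratically close to the canonical orthonormal system built from these Fourier modes (weighted exponentials on $R_1$, plain exponentials on $R_2$). The Bari--Markus theorem then furnishes a Riesz basis of the closed span; identifying that span with $\mathcal H_{p,\alpha}$ (completeness) is carried out by invoking the resolvent kernel decomposition from the preceding spectral analysis.

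\textbf{Part (2) --- Orthogonality.} A $\mathcal H_p$-eigenfunction reads $\tilde e_m=(\sin(\omega_m\cdot)\mathbf{1}_{R_2},\,i\omega_m\sin(\omega_m\cdot)\mathbf{1}_{R_2})$ with $\omega_m=2\pi m/L$, so the half-line contributions drop out of $\langle e_n,\tilde e_m\rangle_{\mathcal H}$. Integration by parts on $[0,L]$, using $\phi_{n,2}(0)=\phi_{n,2}(L)$ and $\cos(\omega_m L)=1$, combines the $\widehat H^1$- and $L^2$-parts into a scalar multiple of $\int_0^L\phi_{n,2}(x)\sin(\omega_m x)\,dx$. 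Substituting $\phi_{n,2}$ and using $\beta=e^{\lambda_n L}$, the contributions of $e^{\lambda_n x}$ and $\beta e^{-\lambda_n x}$ cancel identically and the integral vanishes; bilinearity and continuity extend this to all of $\mathcal H_p$ and $\mathcal H_{p,\alpha}$.

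\textbf{Part (3) --- Exponential decay.} Decompose $U_0=(u_0,u_1)$ as $U_0^{(\alpha)}+U_0^p$ via (2) and expand $U_0^{(\alpha)}=\sum_n a_n e_n$. The semigroup acts diagonally on the Riesz basis, so combined with the Riesz basis norm equivalence and the common real part $c$ one obtains
\begin{equation*}
\bigl\|e^{t\mathcal A_\alpha}U_0^{(\alpha)}\bigr\|_{\mathcal H}\le Ce^{ct}\bigl\|U_0^{(\alpha)}\bigr\|_{\mathcal H}.
\end{equation*}
On $\mathcal H_p$ the semigroup is isometric (purely imaginary eigenvalues, and the dissipation $-\alpha|u'_1(0,\cdot)|^2$ in the identity \eqref{energid} vanishes identically on such states), and the orthogonality from (2) propagates in time because $e^{t\mathcal A_\alpha}$ preserves each of $\mathcal H_{p,\alpha}$ and $\mathcal H_p$. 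Consequently $\|(u(t),u'(t))-e^{t\mathcal A_\alpha}U_0^p\|_{\mathcal H}\le Ce^{ct}\|U_0\|_{\mathcal H}$, with sharp rate the spectral abscissa $\sup_{\lambda\in\Sigma_{p,\alpha}}\Re\lambda=c<0$.

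\textbf{Main obstacle.} The delicate step is completeness in part (1): quadratic closeness to a Fourier reference is standard, but showing that the closed span of $\{e_n\}$ equals the full space $\mathcal H_{p,\alpha}$ (rather than a proper subspace) requires the fine resolvent kernel decomposition established earlier in the paper, together with careful control of the mixed $\widehat H^1\times L^2$ norm across the non-compact half-line and the compact loop.
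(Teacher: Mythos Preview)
Your proposal is essentially sound but differs from the paper's argument in two places, and your ``main obstacle'' is a non-issue.

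\textbf{Part (1).} The paper does not use Bari--Markus. Instead it computes the Gram matrix of the normalized eigenfunctions $\Psi_n$ directly and shows $|\langle\Psi_n,\Psi_m\rangle_{\mathcal H}|=\mathcal O(|n-m|^{-1})$; boundedness and invertibility of the Gram operator on $\ell^2(\mathbb Z)$ then follow from a discrete Hilbert-type inequality, and the isomorphism $\ell^2\to\mathcal H_{p,\alpha}$ is checked by hand. Your Bari--Markus route is a legitimate alternative, but it requires you to exhibit an explicit orthonormal reference system in the mixed space $\widehat H^1(\mathcal R)\times L^2(\mathcal R)$ and verify \emph{square-summable} closeness, which is slightly more delicate than the $\mathcal O(1/|n-m|)$ Gram estimate. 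More importantly, your ``main obstacle'' is illusory: in the paper $\mathcal H_{p,\alpha}$ is \emph{defined} as the closed linear span of the $\Psi_n$, so completeness is tautological and no resolvent-kernel argument is needed.

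\textbf{Part (2).} Your direct integral computation is correct (indeed $\int_0^L(e^{\lambda_n x}+\beta e^{-\lambda_n x})\sin(\omega_m x)\,dx=0$ via $e^{\lambda_n L}=\beta$), but the paper proceeds abstractly: since the $\Sigma_p$-eigenfunctions have vanishing half-line component, they satisfy both $(A_\alpha)$ and $(A_{-\alpha})$, hence lie in $\mathcal D(\mathcal A_\alpha^*)$ with $\mathcal A_\alpha^*\psi^p=-\lambda\psi^p$; then $\lambda'\langle\psi^{p,\alpha},\psi^p\rangle=\langle\psi^{p,\alpha},\mathcal A_\alpha^*\psi^p\rangle=\lambda\langle\psi^{p,\alpha},\psi^p\rangle$ (using $-\bar\lambda=\lambda$ for $\lambda\in i\mathbb R$), and $\lambda'\neq\lambda$ forces orthogonality. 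This adjoint argument is shorter and avoids any edge-by-edge calculus.

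\textbf{Part (3).} Your argument coincides with the paper's.
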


We end this section  with an outline of the paper.
In  section \ref{SPe}, we discuss some spectral properties of the operator $\AA$. In Proposition \ref{P0}, we prove that $\AA$ with domain ${\mathcal D}(\AA)$ is closed, dissipative (respectively accretive, respectively skew-symmetric) if $\alpha>0$,  (respectively $\alpha<0$, respectively  $\alpha=0$). The rest of this section is dedicated to proving Theorem \ref{Intr}. The proof was divided into multiple natural steps. In the first one (see Lemma \ref{P2} and Corollary \ref{Res}),  we show that there is no spectrum in the right-half plane,  and we give an optimal bound estimate for the resolvent. The Lax-Milgram theorem is used as the basis for the proof. In Theorem \ref{th1}, explicit computation provides a precise description of all the spectrum of $\AA$. Finally, to investigate the spectrum of $\AA$ 
%show that there is no spectrum for $\alpha\[0, 1]\cup\{3\} (respectively. only the discrete spectrum for $\alpha\in ]1, 3[\cup ]3, +\infty[$ ) 
in the left half plane, we compute explicitly the resolvent of $\AA$ (see Theorem \ref{resolv}).
In section \ref{asybasis}, we first prove that the generalized eigenfunctions associated to the discrete spectrum, $\Sigma_{p, \alpha}$ with $\alpha\in ]1,3[\cup ]3, +\infty[$ form a Riesz basis of a subspace of $\mathcal H$.  
This enables us to demonstrate Theorem \ref{energyen}.
%Ici plan de l'article et idees des epreuves. (Mouez)

% From now on, for $u=(u_1,u_2)\in  {\widehat H}^1(\mathcal{R})$ we denote by $u(0)$  the common value  $u_1(0)=u_2(0)=u_2(L).$
 \vskip 1cm
 
{\bf Notations.} The following standard notations are used in the paper:
\begin{itemize}
%\item Given a self-adjoint operator $H$, %$P_{\rm ac}(H)$  denotes the projection %onto the absolutely continuous subspace %of $H$. Moreover $E_H( \cdot)$ is the %spectral measure associated with $H$.
\item $\sigma(H)$, $\sigma_c(H)$, $\sigma_p(H)$, $\rho(H)$ denote the spectrum, continuous spectrum, point spectrum and the resolvent set for an operator $H$, respectively.
\item
For  $u= (u_k)_{k=1,2} \in {\widehat H}^1(\mathcal R)$ with $u_k\in H^2(R_k)$, $\partial^2u$ (or $u^{(2)}$) denotes $\left(\frac{d^2 u_k}{dx^2}\right)_{k=1,2}$.
\item The class  $C^0(\overline {R}_k)$ consists of all continuous functions on the closed set  $\overline{R}_k$.
\item  The class $C^1(\overline{R}_k)$ consists of all differentiable functions whose derivative is continuous on $\overline{R}_k$.
\item We denote $\mathbb C^\pm=\{z\in \mathbb C; \,\,\, \pm \Re z>0\}$.
\end{itemize}

\section{Spectrum of the wave operator}\label{SPe}

 In this section we are interested in the spectral properties of the operator $\AA$. 
 %In Proposition \ref{P0}, we prove that $\AA$ with domain ${\mathcal D}(\AA)$ is closed, dissipative   (respectively accretive, respectively skew-symmetric) if $\alpha>0$,  (respectively $\alpha<0$, respectively  $\alpha=0$). In subsection 3.1, we investigatethe continuous and point spectrum, and we prove Theorem \ref{Intr}.

Throughout this paper we use frequently  the following classical lemma. For the sake of completeness we provide its proof in an Appendix.

%For completeness we give a proof in an appendix.

\begin{lemma}\label{P1}
$H^1(\mathcal R)$ endowed with the inner product $\langle \cdot, \cdot \rangle_{{\widehat H}^1(\mathcal R)}$ is dense in  ${\widehat H}^1(\mathcal R)$.
\end{lemma}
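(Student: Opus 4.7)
My plan is to pass to derivatives and reduce the statement to an $L^2$-density question. The map $[f]\mapsto f'$ realizes an isometric isomorphism between $\widehat H^1(\mathcal R)$ and the closed subspace
\[
V := \Bigl\{\, h = (h_1,h_2)\in L^2(\mathcal R):\ \int_0^L h_2(x)\,dx = 0\,\Bigr\}
\]
of $L^2(\mathcal R)$: the Kirchhoff continuity condition built into $\dot H^1(\mathcal R)$ forces the integral constraint on $h_2$, and conversely every $h\in V$ has the componentwise primitives $f_i(x) := \int_0^x h_i$ as a $\dot H^1$-representative. Under this identification, the inclusion $H^1(\mathcal R)\hookrightarrow \widehat H^1(\mathcal R)$ becomes the set of $h\in V$ whose half-line primitive $x\mapsto \int_0^x h_1$ additionally belongs to $L^2(R_1)$; the lemma is equivalent to density of this set in $V$.

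Given $f\in\dot H^1(\mathcal R)$, I first normalize: quotienting by $\mathcal P_0$ I pick the representative with $f_1(0)=f_2(0)=f_2(L)=0$. Since $R_2$ is bounded, $f_2\in H^1(R_2)$ already, so only the half-line component needs adjustment. Setting $h_1 := f_1'\in L^2(R_1)$, I must approximate $h_1$ in $L^2(R_1)$ by functions whose primitive from $0$ is in $L^2(R_1)$. The construction is a truncation plus a compensating wide-and-low bump,
\[
h_1^{(n)}(x) := h_1(x)\,\mathbf 1_{[0,n]}(x) \,-\, c_n\,\rho_n(x),\qquad c_n := \int_0^n h_1(t)\,dt,\qquad \rho_n := \tfrac{1}{n^2}\,\mathbf 1_{[n,\,n+n^2]},
\]
chosen so that $\int_0^\infty h_1^{(n)} = 0$. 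Consequently the primitive $g_n^{(1)}(x) := \int_0^x h_1^{(n)}$ is supported in $[0,\,n+n^2]$, hence lies in $L^2(R_1)$, so $g_n^{(1)}\in H^1(R_1)$ with $g_n^{(1)}(0)=0$. Setting $g_n := (g_n^{(1)},f_2)$ then produces an element of $H^1(\mathcal R)$ whose $\widehat H^1$-distance from $f$ is exactly $\|h_1-h_1^{(n)}\|_{L^2(R_1)}$.

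Convergence is routine: the truncation error $\|h_1\mathbf 1_{[n,\infty)}\|_{L^2}$ vanishes by dominated convergence, while Cauchy--Schwarz gives $|c_n|\leq \sqrt n\,\|h_1\|_{L^2}$ and a direct computation yields $\|\rho_n\|_{L^2} = 1/n$, so the compensation term obeys $|c_n|\,\|\rho_n\|_{L^2}\leq \|h_1\|_{L^2}/\sqrt n \to 0$. The subtlety worth emphasizing, and the obstacle the bump is designed to defeat, is that a naive multiplicative cutoff $g_n = \chi_n f$ does \emph{not} work: since an element $f_1\in \dot H^1(R_1)$ with $f_1(0)=0$ only satisfies $|f_1(x)|\lesssim \sqrt x$, the error $\|f_1\chi_n'\|_{L^2}$ is merely bounded, not small. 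Passing to derivatives removes the growth of $f_1$, and choosing $\rho_n$ supported on an interval of length $n^2$ (rather than the naive length $n$) is exactly what is needed for $\|\rho_n\|_{L^2}$ to beat the $\sqrt n$ growth of $|c_n|$.
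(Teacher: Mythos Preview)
Your proof is correct, but it takes a genuinely different route from the paper's. The paper \emph{does} use the multiplicative cutoff $U_R=(\chi_R u_1,u_2)$ that you dismiss as unworkable; the point you are missing is that the crude bound $|u_1(x)|\lesssim\sqrt{x}$ can be sharpened to $|u_1(x)|^2\le\epsilon x$ for all $x\ge R_\epsilon$, because the tail $\int_t^\infty|u_1'|^2$ is small. Writing $u_1(x)=u_1(t)+\int_t^x u_1'$ and applying Cauchy--Schwarz on $[t,x]$ gives $|u_1(x)|^2\le 2|u_1(t)|^2+2x\int_t^\infty|u_1'|^2$, and choosing first $t$ then $R$ large kills both terms relative to $x$. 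This $o(x)$ bound is exactly what makes $\|\chi_R' u_1\|_{L^2}\to 0$. So your side remark that the cutoff ``does not work'' is inaccurate, though it does not affect your own argument.

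As for what each approach buys: the paper's cutoff argument is shorter and stays at the level of $u_1$, but hinges on the refined growth estimate. Your derivative-level construction (truncate $h_1$, compensate with a wide bump to force $\int h_1^{(n)}=0$, take the primitive) sidesteps growth estimates entirely and is arguably more transparent---density in $\widehat H^1$ becomes an $L^2$ approximation with a linear constraint. Your choice of bump width $n^2$ (so that $\|\rho_n\|_{L^2}=1/n$ beats $|c_n|\lesssim\sqrt n$) is the exact counterpart of the paper's passage from $O(x)$ to $o(x)$; both devices are solving the same square-root obstruction, just on opposite sides of the differentiation.
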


%Here, for $u\in \mathcal H$ with $u_k\in H^2(R_k)$, $\partial^2u$ (or $u^{(2)}$) denotes $\Big(\frac{d^2 u_k}{dx^2}\Big)_{k=1,2}$.
%where $\alpha$ is a positive constant and 
%$$ \mathcal{A}_\alpha  \begin{pmatrix}  u_k \\ v_k  \end{pmatrix}_{k=1,2} =  \left(\begin{array}{cc}0 & I \\\left(\frac{d^2 u_k}{dx^2} \right)_{k=1,2} & 0\end{array}\right). $$

 %We start by stating and proving some auxiliary results.
 %We have the following useful result :

%The following result insures that the operator $\AA$ with domain ${\mathcal D}(\AA)$ is closed, dissipative   (respectively accretive, respectively  skew-symmetric) if $\alpha>0$,  (respectively $\alpha<0$, respectively  $\alpha=0$).

%We begin with a general result on the closedeness such operators.

\begin{proposition}\label{P0}
The operator ${\mathcal A}_\alpha$ with domain ${\mathcal D}(\mathcal{A}_\alpha)$   satisfies the following properties:
\begin{enumerate}
\item For all $U=(u,v)\in {\mathcal D}(\mathcal{A}_\alpha)$,
\begin{equation}\label{U2}
\langle {\mathcal A}_\alpha U, U\rangle=-2i\Im \sum_{k=1}^2\langle u'_k, v'_k\rangle-\alpha \vert v_1(0)\vert^2.
\end{equation}
\item $(\AA, {\mathcal D}(\AA))$ is a closed operator.
\item $\DD (\AAA)=\DD ({\mathcal A}_{-\alpha}),
\mathcal{A}_\alpha^*
:= 
\left(
\begin{array}{cc}
0 & -I \\
-\partial^2 & 0
\end{array}
\right)
$
and for all $U=(u,v)\in {\mathcal D}(\mathcal{A}^*_\alpha)$,
\begin{equation}\label{U2adj}
\langle {\mathcal A}^*_\alpha U, U\rangle= 2i\Im \sum_{k=1}^2\langle u'_k, v'_k\rangle-\alpha \vert v_1(0)\vert^2.
\end{equation}
\end{enumerate}
\end{proposition}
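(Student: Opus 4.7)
All three parts rest on a single integration-by-parts identity on each edge $R_k$, exploiting the vertex continuity $v_1(0)=v_2(0)=v_2(L)$ built into $\widehat H^1(\mathcal R)$ and the transmission condition $(A_\alpha)$. Note that $u_k\in\dot H^2(R_k)$ implies $u_1'\in H^1(R_1)$, so $u_1'(x)\to 0$ as $x\to+\infty$ and all edge boundary terms are justified. For part~(1), I would unfold the $\mathcal H$ inner product: since $\AA U=(v,\partial^2 u)$,
\[\langle \AA U,U\rangle_{\mathcal H}=\sum_{k=1}^2\langle v_k',u_k'\rangle+\sum_{k=1}^2\langle \partial^2 u_k,v_k\rangle.\]
Integration by parts in the second sum produces boundary terms at $0^+$ on $R_1$ and at $0^+,L^-$ on $R_2$; the continuity of $v$ at the vertex packages these into $\overline{v_1(0)}\bigl[-u_1'(0)-u_2'(0)+u_2'(L)\bigr]$, which equals $-\alpha|v_1(0)|^2$ by $(A_\alpha)$. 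The residual bulk term is $-\sum_k\langle u_k',v_k'\rangle$, and combining with $\sum_k\langle v_k',u_k'\rangle$ yields $-2i\Im\sum_k\langle u_k',v_k'\rangle$.

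For part~(2), I would use the standard graph-closedness criterion. Given $U_n=(u_n,v_n)\in\mathcal D(\AA)$ with $U_n\to(u,v)$ in $\mathcal H$ and $\AA U_n=(v_n,\partial^2 u_n)\to(w_1,w_2)$ in $\mathcal H$, the convergence of the first coordinate of $\AA U_n$ in $\widehat H^1$ combined with $v_n\to v$ in $L^2$ forces $v=w_1\in\widehat H^1$, while convergence of $\partial^2 u_n\to w_2$ in $L^2$ on each edge places $u_k$ in $\dot H^2(R_k)$ with $\partial^2 u=w_2$. Continuity of the derivative traces $u_k'(0),\,u_2'(L)$ on the $H^2$-topology and of $v_1(0)$ on the $H^1$-topology then preserves $(A_\alpha)$ in the limit.

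For part~(3), the plan is to repeat the integration by parts on the sesquilinear form $\langle \AA U,V\rangle$ with $V=(f,g)$, transferring $\partial^2$ from $u$ to $f$. Two integrations by parts yield the candidate adjoint expression $(-g,-\partial^2 f)$, together with vertex boundary contributions involving $v_1(0)\overline{f_1'(0)+f_2'(0)-f_2'(L)}$ (from the second derivative passing over $f$) and $\alpha v_1(0)\overline{g_1(0)}$ (from applying $(A_\alpha)$ when $\partial^2$ acts on $u$). Imposing the adjoint relation $f_1'(0)+f_2'(0)-f_2'(L)=-\alpha g_1(0)$, which is exactly $(A_{-\alpha})$, kills these boundary terms and shows $\mathcal D(\AA_{-\alpha})\subset \mathcal D(\AA^*)$ with $\AA^*V=-\AA_{-\alpha}V$. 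The reverse inclusion requires testing the defining identity $\langle \AA U,V\rangle=\langle U,W\rangle$ for $V\in\mathcal D(\AA^*)$ against a family of $U\in\mathcal D(\AA)$ rich enough to recover both regularity and boundary data: first $U=(0,v)$ with $v$ compactly supported away from the vertex (using Lemma~\ref{P1} to work with $H^1$ test vectors) to get $f_k\in\dot H^2$, then $U=(u,0)$ satisfying the homogeneous vertex relation to promote $g$ to $\widehat H^1$, and finally $U$ with prescribed nonzero trace $v_1(0)$ to isolate the adjoint transmission condition. Formula \eqref{U2adj} then follows from applying \eqref{U2} to $\AA_{-\alpha}$ via $\AA^*=-\AA_{-\alpha}$. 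I expect this last testing argument for $\mathcal D(\AA^*)\subset\mathcal D(\AA_{-\alpha})$, which must separate bulk regularity from the single vertex relation, to be the main technical obstacle.
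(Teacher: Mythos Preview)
Your proposal is correct and follows essentially the same route as the paper: part~(1) is identical, and part~(3) matches the paper's two-step argument (integration by parts to show $\mathcal D(\mathcal A_{-\alpha})\subset\mathcal D(\mathcal A_\alpha^*)$, then testing with $U=(0,v)$, $U=(u,0)$, and finally $U$ with nonzero vertex trace to obtain the reverse inclusion and $(A_{-\alpha})$). The only methodological difference is in part~(2): you pass to the limit in $(A_\alpha)$ directly via trace continuity (using $u_n'\to u'$ in $H^1(R_k)$ and $v_n\to v$ in $H^1(R_k)$), whereas the paper normalizes representatives so that $v_n(0)=f(0)$, passes to the limit in the identity~\eqref{U2}, and then reads off the transmission condition after a final integration by parts; both arguments are valid and of comparable length.
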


\begin{proof}

For $U=(u,v) \in {\mathcal D}(\mathcal{A}_\alpha)$, we have
%Taking $U$ in \eqref{}, we get by  an integration by parts
$$
 \langle \AA U,U\rangle=\sum_{k=1}^2\Big\{\langle v'_k, u_k'\rangle+\langle u_k'', v_k\rangle \Big\}=-2i\sum_{k=1}^2\Im \langle u'_k, v'_k\rangle-\Big(u'_2(0)+u'_1(0)-u'_2(L)\Big)v_1(0), $$
which  yields \eqref{U2} by \eqref{A1}.

\medskip

Let $U(n)=(u(n),v(n))$ be  a sequence in ${\mathcal D}(\mathcal{A}_\alpha)$ such that  $U(n)$  (resp. $\AA U(n)$) converges to $(x,y)$ (resp. $(f, g)$) in 
${\mathcal H}$. 

Since $v(n)\xrightarrow{\widehat{H}^1(\mathcal R)} f$ and  $v(n)\xrightarrow {{L}^2(\mathcal R)} y$ it follows that  $y=f\in H^1(\mathcal R)$. Similarly, from
$u(n)\xrightarrow{\widehat{H}^1(\mathcal R)} x$,  $u''(n)\xrightarrow {{L}^2(\mathcal R)} g$ we deduce $x\in \widehat{H}^1(\mathcal R)$  and  $x''=g\in L^2(\mathcal R)$. By definition of $\widehat {H}^1(\mathcal R)$ we may assume that $v(n)(0)=f(0)$ for all $n\in \mathbb N$. 

Applying \eqref{U2} to $U(n)$, we obtain
$$
\langle {\mathcal A}_\alpha U(n), U(n)\rangle=-2i\Im \sum_{k=1}^2\langle u'_k(n), v'_k(n)\rangle-\alpha \vert f(0)\vert^2.
$$
Letting $n\rightarrow +\infty$, we conclude that

 % and let $F=(f,g)\in {\mathcal D}(\mathcal{A}_{-\alpha}) $.  

\begin{equation}\label{U0002}
\langle (f,g),  (x,y)\rangle_{\mathcal H}=
\sum_{k=1}^2 \Big(\langle f_k',x'_k    \rangle +\langle g_k, y_k\rangle \Big) =-2i\Im \sum_{k=1}^2\langle x'_k, y'_k\rangle-\alpha \vert  f(0)\vert^2.
\ee
Combining this with the fact that $x'_k(0)$, $x'_2(L)$ are well defined  (since $x', x''\in  L^2$), we deduce by an integration by parts that
$$x_1'(0)+x'_2(0)-x_2'(L)=\alpha f(0).$$
Summing up, we have proved that $(x,y)\in {\mathcal D}(\AA)$ and $\AA (x,y)=(f,g)$. This gives the item (2).

Now we prove (3). 
Let $U=(u,v) \in {\mathcal D}(\mathcal{A}_\alpha)$,  and let $F=(f,g)\in {\mathcal D}(\mathcal{A}_{-\alpha}) $.   Using  the fact  that  $U$,  $F$ satisfies 
$(A_\alpha)$   and $(A_{-\alpha})$
 respectively, 
an integration by parts shows that
%A straightforward computation shows that
$$\langle {\mathcal A}_\alpha U, F\rangle=\langle (v, u^{(2)}), (f,g)\rangle=-\langle (u, v), (g,f^{(2)})\rangle=-\langle U,  {\mathcal A}_{-\alpha }F\rangle,$$
which yields
 $${\mathcal D}({\mathcal A}_{-\alpha})\subset {\mathcal D}(\mathcal{A}_\alpha^*),\,\, \text { and } {\mathcal A}_\alpha^*=-{\mathcal A}_{-\alpha} \text { on }  {\mathcal D}({\mathcal A}_{-\alpha}).$$
 Next, let $G=(\omega, \Omega) \in  {\mathcal D}(\mathcal{A}_\alpha^*)$, and put $F=(f, g):= \mathcal{A}_\alpha^*G$. By definition of the adjoint-operator, we have
 $$ \langle \AA  U, G\rangle=\langle U, \AAA G\rangle,\,\,\,\,\, \text { for all } U\in \DD(\AA). $$
 Thus
\be\label{U3}
 \sum_{k=1}^2\Big\{\langle v'_k, \omega_k'\rangle+\langle u_k^{(2)}, \Omega_k\rangle \Big\}=\sum_{k=1}^2\Big\{\langle u'_k, f_k'\rangle+\langle v_k, g_k\rangle\Big\}, \ee
for all $u_k\in H^2(R_k), $ $v_k\in H^1(R_k)$. Applying the above equality to $u_1=u_2=v_2=0$   and $v_1\in C^\infty_0(R_1)$ resp. ($u_1=u_2=v_1=0$ and $v_2\in C^\infty_0(R_2)$),  we deduce that
\be \label{U4}
-\omega''_k=g_k,\,\, \,\,\, \text { in } \DD'(R_k),\,\, k=1, 2.
\ee
Similarly, repeated  application of \eqref{U3} to  $v_1=v_2=u_k=0$ for $k=1$ and $k=2$   gives  (in the sense of distribution)
\be \label{U5}
-\Omega'_k=f'_k+C_k,\,\, \,\,\, \,\, k=1, 2,
\ee
for some constant $C_k$. Since $\Omega_1, f_1,  f'_1\in L^2(0,+\infty)$, we see that $C_1=0$, hence that
\be \label{U05}
-\Omega_1=f_1.
\ee
Making use of \eqref{U5}, and using the fact  that $f\in  H^1(\mathcal R)$ as well as  the fact that $R_2$ is a closed curve we deduce  
\be\label{U6}
\Omega_2(L)=\Omega_2(0)=\Omega_1(0).
\ee
%and\be\label{U7}-\Omega_2=f_2.\ee

An integration by parts using \eqref{U4} and \eqref{U5}  shows that \eqref{U3} can be written as
$$
 \big[v_1, \omega_1'\big]_0^{+\infty}+ \big[v_2, \omega_2'\big]_0^{L} +\big [u_1', \Omega_1\big]_0^{+\infty} + 
 \big [u_2', \Omega_2\big]_0^{L}=0,
 $$
which together with  \eqref{U6} and the fact that $(u,v)\in \AA$ yields
$$\Big(\omega_2'(L)-\omega_1'(0)-\omega_2'(0)\Big) v_1(0)=\Big(u_1'(0)+u_2'(0)-u_2'(L)\Big) \omega_1(0)=\alpha v_1(0)\omega_1(0).$$
Consequently,
$$\omega_1'(0)+\omega_2'(0)-\omega_2'(L)=-\alpha \omega_1(0).$$
Combining \eqref{U4}, \eqref{U5}, \eqref{U05}  and the above equality we deduce  that $\DD (\AAA)\subset \DD({\mathcal A}_{-\alpha})$.
This concludes the proof of part (1) of  Theorem \ref{th1}.
% which ends the proof of the first statement. 

\end{proof}

%\subsection{Spectrum in $\mathbb C^+$.}

{\rm  From now on, we assume that $\alpha\geq 0$. In order to  investigate the spectrum of $\AA$ in $\mathbb C^+$, we use the general standard method for dissipative operator based on the 
the Lax-Milgram theorem. First, 
 let us introduce  the linear  operator ${\mathbb G}_\alpha(z)$  %(respectively 
from 
$H^1(\mathcal R)$    into $ H^{-1}(\mathcal R)$ defined by
$$\langle {\mathbb G}_\alpha(z) \phi, \psi\rangle:=-\langle \phi',\psi'\rangle_{L^2({\mathcal R})}-\alpha z \overline{\psi_1(0)} \phi_1(0)-z^2\langle \phi,\psi\rangle_{L^2(\mathcal R)}, \,\,  \forall \phi \in {H}^1(\mathcal R), \psi \in H^1(\mathcal R).$$
Here $H^{-1}(\mathcal R)$ is the 
anti-dual space of $H^1(\mathcal R)$, i.e.,
 $$H^{-1}(\mathcal R):=\{\ell :H^1(\mathcal R)\rightarrow \mathbb C: \ell \text { antilinear }, \text { bounded } \}.$$
Notice that $\delta : \phi \mapsto \overline{ \phi_1(0)},$ belongs to $H^{-1}(\mathcal R)$. }

\begin{lemma}\label{P2}
For $\alpha\geq 0$ and $z\in \mathbb  C^+$, the   operator ${\mathbb G}_\alpha(z)$ is bounded and boundedly  invertible. Moreover, for all $h=(h_1,h_2)\in L^2(\mathcal R)$ and all $\nu \in \mathbb C$ the function
$w=(w_1,w_2):={\mathbb G}^{-1}_\alpha(z)\big(h+\nu \delta\big) \in H^1(\mathcal R)$ satisfies:
\be\label{UI1}
w''_k-z^2w_k=h_k,\,\, \text { in } L^2(R_i),
\ee

\be
\label{UI2}
w_1'(0)+w'_2(0)-w_2'(L)-z\alpha w_1(0)=\nu.
\ee
In particular, if $f\in H^1(\mathcal R)$ and $g\in L^2(\mathcal R)$ then $U=(u,v)\in \mathcal D(\AA)$ and
\be
\label{UI02}
(\AA-z)U=(f,g),
\ee
where
$$
u=\mathbb G_\alpha^{-1}(z)\big(zf+g+\alpha f_1(0) \delta\big),\,\,\,\,\,\,\,\, v=f+zu.$$
\end{lemma}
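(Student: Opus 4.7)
My plan is to invoke the Lax--Milgram theorem on the sesquilinear form
\[
a(\phi,\psi):=\langle\phi',\psi'\rangle_{L^2(\mathcal R)}+\alpha z\,\overline{\psi_1(0)}\phi_1(0)+z^2\langle\phi,\psi\rangle_{L^2(\mathcal R)}
\]
on $H^1(\mathcal R)\times H^1(\mathcal R)$, which by construction satisfies $\langle\mathbb G_\alpha(z)\phi,\psi\rangle=-a(\phi,\psi)$. Continuity of $a$ will follow at once from Cauchy--Schwarz together with the boundedness of the trace $\phi\mapsto\phi_1(0)$ on $H^1(\mathcal R)$ (a standard Sobolev trace on the half-line and the loop), and this already gives the continuity of $\mathbb G_\alpha(z):H^1(\mathcal R)\to H^{-1}(\mathcal R)$. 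For coercivity I will multiply by $\bar z$ and take real parts: since $\alpha\geq 0$ and $\operatorname{Re}(z)>0$,
\[
\operatorname{Re}\bigl(\bar z\,a(\phi,\phi)\bigr)=\operatorname{Re}(z)\bigl(\|\phi'\|_{L^2}^2+|z|^2\|\phi\|_{L^2}^2\bigr)+\alpha|z|^2|\phi_1(0)|^2\;\geq\;\operatorname{Re}(z)\min(1,|z|^2)\,\|\phi\|_{H^1(\mathcal R)}^2.
\]
Thus the rotated form $\bar z\,a$ is bounded and coercive on $H^1(\mathcal R)$, so Lax--Milgram makes $\bar z\,\mathbb G_\alpha(z)$, and hence $\mathbb G_\alpha(z)$ itself, a bounded isomorphism from $H^1(\mathcal R)$ onto its antidual $H^{-1}(\mathcal R)$.

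To recover \eqref{UI1} and \eqref{UI2}, I fix $h\in L^2(\mathcal R)$, $\nu\in\mathbb C$, and set $w:=\mathbb G_\alpha^{-1}(z)(h+\nu\delta)\in H^1(\mathcal R)$, which satisfies $-a(w,\psi)=\langle h,\psi\rangle_{L^2}+\nu\overline{\psi_1(0)}$ for every $\psi\in H^1(\mathcal R)$. Testing first against $\psi\in C_c^\infty(R_k)$ extended by zero on the other arm (admissible because its Kirchhoff trace $\psi_1(0)=\psi_2(0)=\psi_2(L)$ vanishes) erases the vertex contributions and yields $w_k''-z^2 w_k=h_k$ in $\mathcal D'(R_k)$; since $h_k\in L^2(R_k)$, this actually holds in $L^2(R_k)$, giving $H^2$-regularity on each arm so that the traces $w_1'(0^+)$, $w_2'(0^+)$, $w_2'(L^-)$ are meaningful. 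Re-inserting an arbitrary $\psi\in H^1(\mathcal R)$, integrating by parts on each $R_k$, and using the Kirchhoff identity at the vertex collapses the identity to
\[
\bigl(w_1'(0^+)+w_2'(0^+)-w_2'(L^-)-\alpha z\,w_1(0)-\nu\bigr)\overline{\psi_1(0)}=0,
\]
from which the freedom in $\psi_1(0)\in\mathbb C$ yields \eqref{UI2}.

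For the last assertion I will apply the above with $h:=zf+g\in L^2(\mathcal R)$ and $\nu:=\alpha f_1(0)$, and set $u:=\mathbb G_\alpha^{-1}(z)(h+\nu\delta)$, $v:=f+zu$. Then $u_k''-z^2u_k=zf_k+g_k$ in $L^2(R_k)$, and the jump condition rewrites as
\[
u_1'(0^+)+u_2'(0^+)-u_2'(L^-)=\alpha f_1(0)+\alpha z\,u_1(0)=\alpha v_1(0),
\]
which is exactly $(A_\alpha)$; together with $u,v\in H^1(\mathcal R)\subset\widehat H^1(\mathcal R)$ and $u_k\in\dot H^2(R_k)$ this gives $(u,v)\in\mathcal D(\mathcal A_\alpha)$. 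A direct computation then closes the proof: $(\mathcal A_\alpha-z)(u,v)=(v-zu,\,u^{(2)}-zv)=(f,\,u^{(2)}-z^2u-zf)=(f,g)$. I expect the main obstacle to be the coercivity step, because the sign-indefinite complex coefficient $z^2$ in front of $\|\phi\|^2$ forbids a naive energy estimate; choosing the multiplier $\bar z$ (rather than $1$ or $i$) is precisely what reconciles that term with $\|\phi'\|^2$ and with the dissipative boundary contribution uniformly on $\mathbb C^+$.
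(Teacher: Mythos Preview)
Your proof is correct and follows essentially the same route as the paper: the paper multiplies $\mathbb G_\alpha(z)$ by $e^{i(\pi-\arg z)}=-\bar z/|z|$ and takes real parts, which is your $\bar z$-rotation up to the harmless positive scalar $|z|$, and then derives \eqref{UI1}--\eqref{UI2} exactly as you do by first testing against functions with vanishing vertex trace and then integrating by parts against a general $\psi\in H^1(\mathcal R)$. The only cosmetic difference is that the paper phrases the first testing step as ``$\phi\in H^1(\mathcal R)$ with $\phi_1(0)=0$'' rather than $C_c^\infty(R_k)$, but this changes nothing.
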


\begin{proof}
The first part of Lemma \ref{P2}  follows from the Lax-Milgram theorem and the following  obvious 
inequality: for all $ \phi\in H^1(\mathcal R)$
$$ \Re\Big(\langle e^{i(\pi-\arg(z))} {\mathbb G}_\alpha(z) \phi, \phi \rangle\Big)=\cos(\arg(z))
\sum_{k=1}^2 \Vert \phi_k'\Vert_{L^2(R_k)}^2$$
$$+\alpha \vert z\vert  \vert {\phi (0)}\vert^2  +\vert z\vert^2 \cos(\arg(z))  \Vert\phi\Vert_{L^2(\mathcal R)}\geq \cos(\arg(z)) \, \min(1, \vert z\vert^2) \Vert \phi\Vert_{H^1(\mathcal R)}^2.$$

%for all $w, \phi\in H^1(\mathcal R)$. Hence, for real $t$, we have

%$$\vert \langle e^{it} {\mathbb G}_\alpha(z) \phi, \phi\rangle\vert \geq C_{t,z}\Vert \phi\Vert_{{\mathcal H}^1(\mathcal R)},$$for some $t$ depending on $z$. It will be detailed in the next version.

 By definition of $w$, we have
\be \label{UI3}
\langle {\mathbb G}_\alpha(z) w, \phi \rangle=
-\sum_{k=1}^2 \langle w'_k,\phi_k'\rangle_{L^2(R_k)}-\alpha z {w_1(0)} \overline{\phi_1(0)} -z^2 \langle w,\phi\rangle_{L^2(\mathcal R)}
\ee
$$=\langle h+\nu \delta, \phi\rangle=\langle h, \phi\rangle_{L^2(\mathcal R)}+\nu \overline{\phi_1(0)}.$$

Applying the above equality to $\phi\in H^1(\mathcal R)$ with $\phi_1(0)=0$ we deduce \eqref{UI1}. 
Hence that $w_k\in H^2(R_k)$, since $w\in H^1(\mathcal R)$.
 Next, we write the right hand side of  \eqref{UI3} in terms of $w''_k$  by integrating by parts to obtain
 $$\langle {\mathbb G}_\alpha(z) w, \phi \rangle=
\langle w''-z^2w,\phi\rangle_{L^2(\mathcal R)}+\big(w'_1(0)+w'_2(0)-w_2'(L)-\alpha z {w_1(0)} \big)\overline{\phi_1(0)} $$
$$=\langle h, \phi\rangle_{L^2(\mathcal R)}+\nu \overline{\phi_1(0)},\,\,\,\, \forall \phi\in H^1(\mathcal R),$$
which together with \eqref{UI1}  yields
%  it follows from  and   \eqref{UI1} that . Combining this with the fact that 
%$$\langle {\mathbb G}_\alpha(z) w, \phi \rangle=,\,\, \text { for all } \phi\in H^1(\mathcal R),$$
%and using an integration by parts   in the first term of the r.h.s. of \eqref{UI3} we obtain  using again \eqref{UI1}
$$\big(w_1'(0)+w'_2(0)-w_2'(L)-z\alpha w(0)-\nu\big)\overline{\phi_1(0)}=0,\,\, \text { for all } \phi\in H^1(\mathcal R).$$
This gives \eqref{UI2}.  Finally, \eqref{UI02} follows  from \eqref{UI1} and \eqref{UI2} applied to $w=u$, $h=zf+g$ and  $\nu=\alpha f_1(0)$.
\end{proof}

Suppose that $z\in \mathbb C^+$, and let $U\in {\mathcal D}(\AA)$. From \eqref{U2}, we conclude  that 
$$\vert   \langle ({\mathcal A}_\alpha -z)U, U\rangle\vert 
\geq - \Re  \langle ({\mathcal A}_\alpha-z) U, U\rangle \geq
\alpha \vert v_1(0)\vert^2+ \Re z\Vert U\Vert_{\mathcal H}^2\geq  \Re z\Vert U\Vert_{\mathcal H}^2,$$
 hence that  $(\AA-z)$ is injective with closed range. Combining this with \eqref{UI02} and using the fact that  $H^1(\mathcal R)\times L^2(\mathcal R)$ is  dense  in $\mathcal H$ (see Lemma \ref{P1}), we obtain the following corollary:
 
\begin{corollary}\label{Res}  For $\alpha\geq 0$, we have $\mathbb C^+\subset \rho(\AA)$, and  
\be\label{0000}
\Vert (\AA-z)^{-1}\Vert\leq \frac{1}{\Re z},
\ee
uniformly on $z\in \mathbb C^+$.
\end{corollary}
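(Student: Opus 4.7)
The plan is essentially laid out in the paragraph preceding the statement, so I would just organize it into the two standard ingredients: an a priori estimate and a surjectivity argument. Fix $z\in \mathbb C^+$ and $U=(u,v)\in \mathcal D(\mathcal A_\alpha)$. Taking the real part of identity \eqref{U2} gives $\Re\langle \mathcal A_\alpha U,U\rangle=-\alpha|v_1(0)|^2\le 0$ since $\alpha\ge 0$. Therefore
\[
-\Re\langle(\mathcal A_\alpha-z)U,U\rangle=\alpha|v_1(0)|^2+\Re z\,\|U\|_{\mathcal H}^2\ge \Re z\,\|U\|_{\mathcal H}^2,
\]
and the Cauchy--Schwarz inequality $|\langle(\mathcal A_\alpha-z)U,U\rangle|\le \|(\mathcal A_\alpha-z)U\|_{\mathcal H}\,\|U\|_{\mathcal H}$ yields the one-sided estimate $\|(\mathcal A_\alpha-z)U\|_{\mathcal H}\ge \Re z\,\|U\|_{\mathcal H}$. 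This is precisely \eqref{0000} on the range of $\mathcal A_\alpha-z$, and it already implies injectivity and closedness of the range.

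For surjectivity I would invoke Lemma \ref{P2}: given any $(f,g)\in H^1(\mathcal R)\times L^2(\mathcal R)$, the pair $U=(u,v)$ with $u=\mathbb G_\alpha^{-1}(z)(zf+g+\alpha f_1(0)\delta)$ and $v=f+zu$ lies in $\mathcal D(\mathcal A_\alpha)$ and satisfies $(\mathcal A_\alpha-z)U=(f,g)$. Hence $H^1(\mathcal R)\times L^2(\mathcal R)$ is contained in the range of $\mathcal A_\alpha-z$. Since Lemma \ref{P1} gives $H^1(\mathcal R)$ dense in $\widehat H^1(\mathcal R)$, the product $H^1(\mathcal R)\times L^2(\mathcal R)$ is dense in $\mathcal H=\widehat H^1(\mathcal R)\times L^2(\mathcal R)$, so the range of $\mathcal A_\alpha-z$ is dense. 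Combined with closedness from the previous paragraph, the range is all of $\mathcal H$, and therefore $z\in\rho(\mathcal A_\alpha)$ with the norm bound \eqref{0000}.

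There is no genuine obstacle beyond bookkeeping; the only subtlety worth flagging is that Lemma \ref{P2} a priori produces preimages only for data in the smaller space $H^1\times L^2$, not the full energy space $\mathcal H$, so one must explicitly invoke Lemma \ref{P1} to promote the range from dense to total. Once that is done, surjectivity is automatic because the range is simultaneously closed and dense.
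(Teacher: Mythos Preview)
Your proof is correct and follows exactly the approach taken in the paper: the paper derives the same a priori estimate from \eqref{U2} to obtain injectivity and closed range, then invokes \eqref{UI02} of Lemma~\ref{P2} together with the density statement of Lemma~\ref{P1} to conclude that the range is all of $\mathcal H$. Your write-up is a faithful expansion of the short paragraph the paper places just before the corollary.
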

%\begin{proof}From \eqref{U2}, we conclude  that $$\vert   \langle ({\mathcal A}_\alpha -z)U, U\rangle\vert \geq - \Re  \langle ({\mathcal A}_\alpha-z) U, U\rangle \geq\alpha \vert v_1(0)\vert^2+ \Re z\Vert U\Vert_{\mathcal H}^2\geq  \Re z\Vert U\Vert_{\mathcal H}^2,$$ hence that  $(\AA-z)$ is injectif with closed range. Combining this with \eqref{UI02}  we deduce that $(\AA-z)$ is surjectif,  since  $\mathcal H^1(\mathcal R)\times L^2(\mathcal R)$ is  dense  in $\mathcal H$ by  by Lemma \ref{P1}. 
 %On the other hand, it follows from Lemma \ref{P2} that the right hand side of \eqref{0000} is   bounded from $\mathcal H$ into $\mathcal H$.Consequently, it suffices to prove that   the equality \eqref{0000} holds on a dense subspace of $\mathcal H$.  This follows from Lemma \ref{P1} and formula \eqref{UI02} in Proposition \ref{P2}.  \end{proof}

%\subsection{Continuous and point spectrum}
%In this subsection, 
We give now a complete description of the spectra of $\mathcal{A}_\alpha,$ for $\alpha \geq 0$. 
%We begin with a general result on such operators.
\begin{theorem} \label{th1}
For $\alpha \geq 0$, we have 
%The  spectrum of the operator ${\mathcal A}_\alpha$  is given by
$$i\mathbb R\cup \sigma_p(\AA) \subset \sigma(\AA), $$
%with domain ${\mathcal D}(\mathcal{A}_\alpha)$  satisfies the following properties :
where $\sigma_p(\AA)=\Sigma_p\cup \Sigma_{d, \alpha}$ is the set of eigenvalues of the operator $\AA$, with
\begin{itemize}
\item  $\Sigma_p=\{z_n:=\frac{2\pi n}{L} i; n\in \mathbb Z^*\}$  is the set of embedded eigenvalues in $i\mathbb R$. 
\item $$\Sigma_{p,\alpha}=  \left \{
 \begin{array}{lr}
\emptyset \,\,\,\,\,\,\,\,\,\,\,\,\,\,\, \,\,\,\,\,\,\,\,\,\,\,\,\,\,\,\,\,  \,\,\,\,\,\,\,\,\,\,\,\,\,\,\,  \,\,\,\,\,\,\,\,\,\,\,\,\,\,\, \,\,\,\,\,\,\,\,\,\,\,\,\,\,\,\,\,\,\,\,\,\,\,\,\,\,\,\,\,\,\,\,\,\, \,\,\,\,\,\,\,\,\,\,\,\,\,\,\, \,\,\,\,\,\,\,\,\,\,\,\,\,\,\,\,\,\text { if } \alpha\in [0,1]\cup\{3\}\\
\{z_{n,\alpha}=\frac{1}{L}\ln\Big(\frac{3-\alpha}{1+\alpha}\Big)+\frac{2\pi n}{L}i;\,\, n\in \mathbb Z\},\,\,\,\,\,\,\,\,\,\,\,\,\,\,\,\,\,\, \,\,\,\,\,\,\,\,\,\,\,\,\,\,\,\,\, \text { if } \alpha \in  ]1,3[\\
\{z_{n,\alpha}=\frac{1}{L}\ln\Big(\frac{\alpha-3}{1+\alpha}\Big)+\frac{(2n+1)\pi }{L}i;\,\, n\in \mathbb Z\}\,\,\,\,\,\, \,\,\,\,\,\,\,\,\,\,\,\,\,\,\, \,\,\,\,\,\,\,\text { if } \alpha>3,
\end{array}
\right.$$
 is the set of  discrete eigenvalues in $\mathbb C^-$. 
 %For both sets, each eigenvalue is simple.
\end{itemize}

%\end{enumerate}
\end{theorem}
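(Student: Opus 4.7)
By Corollary~\ref{Res} we already have $\mathbb{C}^+ \subset \rho(\mathcal{A}_\alpha)$, so $\sigma(\mathcal{A}_\alpha) \subset i\mathbb{R}\cup\mathbb{C}^-$. The plan is to (i) compute all eigenvalues explicitly by a direct ODE argument, producing the sets $\Sigma_p$ and $\Sigma_{p,\alpha}$, and then (ii) show $i\mathbb{R}\subset\sigma(\mathcal{A}_\alpha)$ by a Weyl sequence on the half-line.

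For the eigenvalue computation I solve $\mathcal{A}_\alpha U = zU$ directly. Writing $U=(u,v)$ gives $v=zu$ and $u_k''=z^2 u_k$, hence $u_k(x)=a_k e^{zx}+b_k e^{-zx}$ for $z\neq 0$. The condition $u_1'\in L^2(0,+\infty)$ forces $b_1=0$ when $\Re z<0$ and forces $u_1\equiv 0$ when $z\in i\mathbb{R}\setminus\{0\}$. The vertex continuity $u_1(0)=u_2(0)=u_2(L)$ together with the transmission condition $(A_\alpha)$ (with $v_1(0)=zu_1(0)$) then yields a small linear system for the coefficients. In the embedded case $z=i\omega\in i\mathbb{R}\setminus\{0\}$, $u_1\equiv 0$; writing $u_2(x)=B\sin(\omega x)$, the remaining equations reduce to $\sin(\omega L)=0$ together with $\cos(\omega L)=1$, forcing $\omega L=2\pi n$ and giving $\Sigma_p=\{2\pi n i/L : n\in\mathbb{Z}^*\}$. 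In the case $\Re z<0$, the identity $u_2(0)=u_2(L)$ (solvable because $e^{zL}\neq 1$) forces $a_2=b_2 e^{-zL}$ and hence $a_1=b_2(1+e^{-zL})$; substituting into the transmission condition collapses everything to the single characteristic equation
\begin{equation*}
(3-\alpha)e^{-zL}=1+\alpha.
\end{equation*}

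The three cases of $\Sigma_{p,\alpha}$ then follow from a direct analysis of this equation via the complex logarithm, retaining only roots with $\Re z<0$. For $\alpha=3$ the equation has no solution. For $\alpha\neq 3$ we write $e^{-zL}=(1+\alpha)/(3-\alpha)$: the ratio lies in $(0,1]$ when $\alpha\in[0,1]$, which forces $\Re z\geq 0$ so that $\Sigma_{p,\alpha}=\emptyset$; the ratio exceeds $1$ for $\alpha\in(1,3)$, producing the real logarithm $\ln\frac{3-\alpha}{1+\alpha}<0$ and the eigenvalues $z_{n,\alpha}=\frac{1}{L}\ln\frac{3-\alpha}{1+\alpha}+\frac{2\pi n i}{L}$; for $\alpha>3$ the ratio is negative real, so its principal argument $\pi$ contributes $(2n+1)\pi i/L$ to the imaginary part and yields $z_{n,\alpha}=\frac{1}{L}\ln\frac{\alpha-3}{1+\alpha}+\frac{(2n+1)\pi i}{L}$.

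Finally, to obtain $i\mathbb{R}\subset\sigma(\mathcal{A}_\alpha)$, I build a Weyl sequence. For any $\omega\in\mathbb{R}$, fix a nontrivial $\chi\in C_c^\infty((1,2))$ and set $u_{1,n}(x)=n^{-1/2}\chi(x/n)e^{i\omega x}$, $u_{2,n}\equiv 0$, $v_n=i\omega u_n$. Since $u_{1,n}$ is supported in $(n,2n)$, all junction conditions at the vertex are trivially satisfied and $U_n\in\mathcal{D}(\mathcal{A}_\alpha)$. A short scaling computation shows that $\|U_n\|_{\mathcal{H}}$ stays bounded below by a positive constant (with a suitable rescaling for the special case $\omega=0$), while $(\mathcal{A}_\alpha-i\omega)U_n=(0,u_{1,n}''+\omega^2 u_{1,n})$ has $\mathcal{H}$-norm $O(1/n)$; Weyl's criterion then gives $i\omega\in\sigma(\mathcal{A}_\alpha)$. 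The most delicate step is the complex-logarithm case analysis of the characteristic equation, which must be handled carefully to extract the correct branch in each $\alpha$-regime; the Weyl sequence step is standard once Corollary~\ref{Res} has separated the spectrum from $\mathbb{C}^+$.
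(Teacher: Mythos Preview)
Your proof is correct and follows essentially the same approach as the paper: a Weyl sequence supported on the half-line to show $i\mathbb{R}\subset\sigma(\mathcal{A}_\alpha)$, and a direct ODE computation of all eigenvalues via the continuity and transmission conditions. The only cosmetic differences are that the paper packages the eigenvalue computation as the vanishing of a $3\times 3$ determinant (treating $\overline{\mathbb{C}^+}$ and $\mathbb{C}^-$ in two blocks rather than singling out the imaginary axis first) and handles the point $\omega=0$ by invoking the closedness of $\sigma(\mathcal{A}_\alpha)$ rather than by rescaling the Weyl sequence.
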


\begin{proof}

 %First, let us prove that $i\mathbb R\subset\sigma(\AA)$. 
 Let $\chi\in C^\infty_0(]0,+\infty[; [0,1])$ be supported in $[1,2]$  and $\Vert \chi\Vert_{L^2}=1.$ For $\lambda\in \mathbb R$ and $j\in \mathbb N^*$, we define
$$ u_{1,j}(x)=\frac{e^{i\lambda x}}{\sqrt{j}} \chi \left(\frac{x}{j}\right),\,\,\, u_{2,j}(x)=0,$$
and
$$U_j=\big( (u_{1,j},  u_{2,j}), i\lambda (u_{1,j}, u_{2, j})\big).$$
Clearly,  $U_j\in \DD(\AA)$.
An easy computation shows that
$$ \Vert u_{1,j}\Vert_{L^2(R_1)}=1, \,\,\,\,\,\,\,   \Vert u'_{1,j}\Vert_{L^2(R_1)}=\vert \lambda\vert +{\mathcal O}\left(\frac{1}{j}\right), \,\,\,\, \, \,\,\,  \Vert u''_{1,j}+\lambda^2u_{1,j}\Vert_{L^2(R_1)}={\mathcal O}\left(\frac{1}{j}\right),$$
hence that 
$$\Vert U_j\Vert_{\mathcal H}^2=\lambda^2+{\mathcal O}\left(\frac{1}{j}\right),\,\,\,\,\,\,\,\,\,\,\,  \Vert (\AA -i\lambda) U_j\Vert^2_{\mathcal H}={\mathcal O}\left(\frac{1}{j}\right), $$
and finally that  $i\mathbb R^*\subset \sigma(\AA)$, by the Weyl criterion.  This shows that $ i\mathbb R\subset \sigma(\AA)$,  since $\sigma(\AA)$ is closed.

\medskip

Next we study the point spectrum  of $\AA$.  Let $z$ be an eigenvalue of $\AA$ with eigenfunction  $U=(u,v)\in \DD(\AA)$ :
$$(\AA -z)U=0.$$
We have
\be\label{U08}
u''_k(x)-z^2u_k(x)=0,\,\, v_k=zu_k,\,\, k=1,2.
\ee

 %In the following we may assume that $\Re  z\geq 0$, otherwise we replace in \eqref{S}  $z$ by $-z$. 
 % Introduce  the  Green function, $G_z$,  for the Helmholtz equation in  one dimension :$$G_z(y)=\frac{1}{2z} e^{-z\vert y\vert}.$$In $L^2 (R_k)$, the general solution of \eqref{S} is given by \be\label{S2}u_k(x)=C_k^1e^{-zx}+C_k^2e^{zx}+ G_z* g_k(x)+zG_k*f_k(x),\,\, x>0,\eewhere
%$$G_z*g_1(x)=\int_0^{+\infty} G_z(x-y) g_1(y) dy,\,\,\, G_z*g_2(x)=\int_0^L G_z(x-y) g_2(y)dy.$$
%Let $z$ be an eigenvalue of $\AA$ with eigenfunction $U=(u,v)$. 
%Assume that $\Re  z\geq 0$ (otherwise we replace $z$ by $-z$). 
% Then\begin{equation}\label{U08} u''_k=zu_k, \,\,\,\, v_k=zu_k, \,\,\, k=1,2.\ee
 %Since the unique solution  in $\DD(\AA)$ of \eqref{U08} for $z=0$  is $(u,v)=(0,0)$, we have $z\not =0$. 

For instance,  assume that $z\in \overline{{\mathbb C^+}}:=\{z\in \mathbb C;  \Re z\geq 0\}$.  We may take $z$  to be non-zero, since the unique solution  in $\DD(\AA)$ of \eqref{U08} for $z=0$  is $(u,v)=(0,0)$.

\medskip

 Let $u_k=C_k^1 e^{-zx}+C_k^2 e^{zx}$ be a solution of \eqref{U08}. Since  $u_2(0)=u_1(0)=u_2(L)$,  $u_1'\in L^2(0, +\infty)$ and $z\not =0$, we obtain
\begin{equation}\label{U8}
 \left \{
\begin{array}{lr}
 C_1^2=0\\
C_2^1+C_2^2-C_1^1=0\\
C_2^1 e^{-zL}+C_2^2 e^{zL}-C_1^1=0.
\end{array}
\right .
\end{equation}
Next,  the transmission condition \ref{A1} applied to $(u,v)$ gives
\begin{equation}\label{U9}
C_2^1(e^{-zL}-1) +C_2^2(1-e^{zL})-(1+\alpha)C_1^1=0.
\ee
%Notice that the eigenvalue cannot be zero, otherwise $(u,v)=(0, 0)$. 
Therefore, the system \eqref{U8}, \eqref{U9}   has   solution $(C_1^1, C_1^2, C_2^1, C_2^2)\not=(0,0,0,0)$ if and only if 
%${\rm det}(\mathcal M_\alpha)=0$, where
\be
{\rm det} \begin{pmatrix}
1&1&-1\\
e^{-zL}&e^{zL}&-1\\
e^{-zL}-1& 1-e^{zL}& -(1+\alpha)\\
\end{pmatrix}=0, \ee
which is equivalent to 
$$ (e^{zL}-1)\left((\alpha+3)e^{zL}-(1-\alpha)\right)=0.$$
Since $\vert 1-\alpha \vert <\vert 3+\alpha\vert$ for  all $\alpha \geq 0$,  it follows that  $e^ {L\Re z}>\frac{\vert 1-\alpha\vert}{\vert 3+\alpha\vert} $ for all $\Re z\geq 0$. Hence,  the equation $(\alpha+3)e^{zl}-(1-\alpha)=0$ has no  solution in $\overline{\mathbb C^+}$. 

\medskip

Consequently, in $\overline{\mathbb C^+}$ the  eigenvalues of  $\AA$  are 
$$z=z_n:=\frac{2\pi n}{L}i, n\in \mathbb Z^*.$$
On the other hand, from \eqref{U8} and \eqref{U9} we deduce that the eigenfunction $U_n$ associated to $z_n$ is given by
\be\label{U10}
U_n =C_n\left(\left(0, \sin\left(\frac{2\pi n}{L}x\right)\right), z_n \left(0, \sin\left(\frac{2\pi n}{L}x\right)\right) \right),
\ee
where $C_n$ is a normalization constant.  
 %$e^{zL}=e^{-zL}$. Hence $z=\frac{k \pi}{L} i$, $k\in \mathbb Z$.
 
\medskip

Next, when  $z\in \mathbb C^-=\{z\in \mathbb C; \Re z<0\}$, we have to  choose $u_1(x)=C_1^2 e^{zx}$. We now apply the above  arguments again, with  $u_1(x)=C_1^1 e^{-zx}$replaced by $u_1(x)=C_1^2 e^{zx}$, to obtain: $z\in \mathbb C^-$ is an eigenvalue of $\AA$ if and only if
${\rm det}(\mathcal M_\alpha(z))=0$, where
\be\label{Matr}
\mathcal M_\alpha(z)=\begin{pmatrix}
1&-1&-1\\
1&-e^{zL }&-e^{-zL}\\
1-\alpha&1- e^{zL}& e^{-zL}-1\\
\end{pmatrix}.\ee
A simple calculus shows that
$$\big(e^{zL}-1\big )\big((\alpha+1) e^{zL}-(3-\alpha)\big)=0.$$
%For $\alpha\in [0, 1]$ the above equation has no solution in $\mathbb C^-$, 
Let us  denote by $S(\alpha)$ the set  of solutions in $\mathbb C^-$ of 
 the above  equation. 
 Clearly $S(\alpha)=\emptyset$ if  $\alpha\in [0,1]\cup \{3\}$. When $\alpha>1$ and $\alpha \neq 3$,  we have
 $$S(\alpha)=\{z_n(\alpha) \in \mathbb C^-; n\in \mathbb Z\},$$
 where
\begin{itemize}
%\item no solutions if  $\alpha \in [0, 1]$ or $\alpha=3$,
\item 
$$z_{n}(\alpha)=\frac{1}{L}\ln\Big(\frac{3-\alpha}{1+\alpha}\Big)+i\, \frac{2\pi n}{L},\,\,\, n\in \mathbb Z,\,\,\,  \text { if } \alpha \in ]1, 3[,$$
\item
$$z_{n}(\alpha)=\frac{1}{L}\ln\Big(\frac{\alpha-3}{1+\alpha}\Big)+i\,\frac{(2n+1)\pi }{L}, \,\,\,\,n\in \mathbb Z ,\,\,\, \,\,\,\,\,\,\text { if } \alpha>3.
$$
\end{itemize}

A trivial verification shows that the  eigenfunction  $U_n= C_n(u_{1,n}, u_{2,n}, z_n(\alpha)u_{1,n},z_n(\alpha)u_{2,n})$ associated to $z_n(\alpha)$ is given  by (subject to normalization constant $C_n$) %$u_k=C_k^1 e^{-zx}+C_k^2 e^{zx}$ 

\be
\label{E4}
u_{1, n}(x)= e^{z_n(\alpha) x},
\ee
\be\label{E6}
u_{2,n}(x)=\frac{3-\alpha}{4}\, e^{-z_n(\alpha)x}+\frac{\alpha+1}{4}\,  e^{z_n(\alpha) x}.
\ee

%\begin{equation}\label{U008} \left \{\begin{array}{lr}C_1^1=0\\
%C_2^1+C_2^2-C_1^2=0\\
%C_2^1 e^{-zL}+C_2^2 e^{zL}-C_1^2=0\\
%C_2^1(e^{-zL}-1) +C_2^2(1-e^{zL})+(1-\alpha)C_1^2=0.
%\end{array}
%\right .\end{equation}

%This ends the proof of the third statement.
Conversely, we can check that the vector  defined by \eqref{U10} (resp. \eqref{E4} and \eqref{E6})   belongs to $\DD(\AA)$   and is an eigenvector corresponding to the eigenvalue $z_n$ (resp. $z_n(\alpha)$).
% This ends the proof of the theorem,  since $\sigma(\AA)$ is a closed subset.
Summing up,  we have proved:  $\Sigma_p\cup \Sigma_{p, \alpha} =\sigma_p(\AA)$, hence that $i\mathbb R\cup \Sigma_p\cup \Sigma_{p, \alpha}\subset \sigma(\AA)$. % The  opposite inclusion follows from the next result. 
   This ends the proof of the theorem.

%The last part  of this theorem follows from the next result.
\end{proof}

According to Corollary \ref{Res} and Theorem \ref{th1},  the proof of Theorem \ref{Intr} is completed by showing that $\mathbb C^-_{p, \alpha} :=\mathbb C^-\setminus{\Sigma_{p,\alpha}}\subset \rho(\AA)$.

Let us introduce the function
\begin{equation} 
{\mathbb  H}_\alpha(z)=\left\{
\begin{array}{l}
\vert (e^{zL}-1)\,  (3-\alpha-(1+\alpha) e^{zL})\vert, \text { if } \alpha\in ]1,3[\cup ]3, +\infty[,\\
\vert 1-e^{zL}\vert,\,\,\,\,\,\,\,\,\,\,\,\,\,\,\,\,\,\,\,\,\,\,\,\, \,\,\,\,\,\,\,\,\,\,\,\,\,\,\,\,\,\,\,\,\,\,\,\,\,\,\,\,\,\,\,\,\, \,\,\text { if } \alpha \in [0, 1]\cup \{3\}.
\end{array}
\right.
\label{ESTX}
\end{equation}
\begin{theorem}\label{resolv}
Assume that $\alpha\geq 0$.  Then for $z\in  \mathbb C^-_{p, \alpha} $  we have $z\in \rho(\AA)$,  and  for all $M>0$ there exists a constant $C=C_{\alpha, M} >0$ such that 
\be\label{Es1}
\Vert (z-\AA)^{-1}\Vert_{{\mathcal L}(\mathcal H)}\leq  \frac{C\,  {\rm max}(1, \vert z\vert^{-1})}
{{\vert \Re z\vert}\, 
{\mathbb H}_\alpha(z)}
\ee
uniformly on $z\in \mathbb C^-_{p, \alpha}$ and $\vert z\vert \leq M$. 
\medskip 

We have in particular that $\sigma (\mathcal{A}_\alpha) = i \mathbb{R} \cup \ds \Sigma_p \cup \ds \Sigma_{p,\alpha}, \, \alpha \geq 0.$
\end{theorem}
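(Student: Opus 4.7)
The strategy is constructive: given $F=(f,g)\in\mathcal{H}$ and $z\in\mathbb{C}^-_{p,\alpha}$, I would build explicitly the unique $U=(u,v)\in\mathcal{D}(\AA)$ solving $(z-\AA)U=F$, and extract the announced bound from the construction. Eliminating $v=zu-f$, the problem reduces to
$$u_k'' - z^2 u_k \;=\; -(zf_k+g_k)=:h_k,\quad k=1,2,$$
coupled by the three transmission conditions built into $\mathcal{D}(\AA)$, together with the decay requirement $u_1'\in L^2(R_1)$.

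I would split $u_k=u_k^p+u_k^h$. The particular parts are defined by explicit Helmholtz Green kernels: on $R_1=(0,+\infty)$ a half-line kernel selecting the decaying mode $e^{zx}$ (legitimate since $\Re z<0$) and satisfying a convenient boundary condition at $0$; on the loop $R_2=(0,L)$ a bounded-interval kernel. Standard $L^2\!\to\!L^2$ Schur-type bounds on these kernels give $\|(u_k^p)'\|_{L^2(R_k)}+|z|\,\|u_k^p\|_{L^2(R_k)}\lesssim |\Re z|^{-1}\|h_k\|_{L^2(R_k)}$, together with controlled boundary traces of $u_k^p$ and $(u_k^p)'$ at the endpoints. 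The homogeneous part is $u_1^h=Ae^{zx}$ on $R_1$ and $u_2^h=Be^{zx}+Ce^{-zx}$ on $R_2$, with three unknowns.

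Plugging $u^h+u^p$ into the three transmission conditions yields a linear system in $(A,B,C)$ whose matrix, after obvious rescalings, is exactly the matrix $\mathcal{M}_\alpha(z)$ of \eqref{Matr}. Cofactor expansion gives
$$\det\mathcal{M}_\alpha(z)\;=\;e^{-zL}(e^{zL}-1)\bigl((\alpha+1)e^{zL}-(3-\alpha)\bigr),$$
so on $\{|z|\leq M\}$ one has $|\det\mathcal{M}_\alpha(z)|\gtrsim \mathbb{H}_\alpha(z)$, with $\mathbb{H}_\alpha$ exactly as in \eqref{ESTX} (the second factor is bounded below away from the zero set $\Sigma_{p,\alpha}$, and for $\alpha\in[0,1]\cup\{3\}$ the effective lower bound reduces to $|1-e^{zL}|$). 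Since $z\notin\Sigma_{p,\alpha}$, Cramer's rule produces $(A,B,C)$ uniquely, with $|A|+|B|+|C|\leq C_{\alpha,M}\,\mathbb{H}_\alpha(z)^{-1}$ times a combination of the boundary traces of $u_k^p$ and of $f_1(0)$. This proves $\mathbb{C}^-_{p,\alpha}\subset\rho(\AA)$; combined with Corollary \ref{Res} and Theorem \ref{th1}, it yields $\sigma(\AA)=i\mathbb R\cup\Sigma_p\cup\Sigma_{p,\alpha}$.

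For the norm bound, I would assemble the $\widehat{H}^1$-norm of $u$ from $\|(u^p)'\|_{L^2}$ and from $|z|(|A|+|B|+|C|)$, and recover $v=zu-f$ in $L^2$ using that only $v+f=zu$ needs to be controlled in the quotient $\widehat{H}^1/\mathcal{P}_0$. The factor $|\Re z|^{-1}$ is inherited from the particular-solution bound, the factor $\mathbb{H}_\alpha(z)^{-1}$ from Cramer's rule, and the factor $\max(1,|z|^{-1})$ appears when balancing the only term in the right-hand side of the $3\times 3$ system that is not scaled by $z$, namely the boundary trace $\alpha f_1(0)$, against the $z$-scaled traces of $h=zf+g$. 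The main obstacle will be the sharp tracking of the $z$-dependence in each trace contribution, in particular controlling $f_1(0)$ through a trace inequality compatible with the homogeneous space $\widehat{H}^1$ (where $f$ is only defined modulo constants), so as to land on the stated exponent and not a weaker one.
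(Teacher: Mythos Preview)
Your plan is essentially the paper's own argument: particular solution via the Helmholtz Green kernel $K_z(y)=e^{z|y|}/(2z)$, homogeneous correction $(Ae^{zx},\,Be^{zx}+Ce^{-zx})$, and the $3\times3$ system with matrix $\mathcal{M}_\alpha(z)$ solved by Cramer's rule, exactly as in \eqref{Matr}--\eqref{Matrbis}. One simplification you should adopt rather than search for a trace inequality: since $f\in\widehat{H}^1(\mathcal{R})$ is only defined modulo constants, the paper simply chooses the representative with $f_1(0)=f_2(0)=f_2(L)=0$, which eliminates the troublesome boundary term outright; the factor $\max(1,|z|^{-1})$ then arises not from $\alpha f_1(0)$ but from the $1/(2z)$ in $K_z$ when expressing $\beta(g_1),\gamma_\pm(g_2)$ in terms of $h_k$ and $f_k'$ via integration by parts (Lemma~\ref{Ap3}).
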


\begin{proof}
From now on, we denote $\nu:=Lz$.
For $U=(u,v) \in {\mathcal D}(\mathcal{A}_\alpha)$  and $F=(f,h)\in \mathcal H$, we have
\be\label{Resol}
(\AA-z)U=F \Longleftrightarrow
 \left \{
 \begin{array}{lr}
(\partial^2-z^2)u=h+zf,\\
\,\,\,\,\,\,\,\,\,\,\,\,\,\,\,\,\,\,\,\,\,\,\,\,v=f+zu.
\end{array}
\right.\ee

%Let $h\in L^2(\mathcal R)$, and L
 Let $u=(u_k)_{k=1,2} \in H^1(\mathcal R)$, $u_k\in H^2(R_k) $ solve 
\be\label{U17}
(\partial^2-z^2)u=h+zf=:g.
\ee 
%As in \cite{amamnic1, aa}, we  look for  solution to \eqref{U17}  which have the form 

%$$u_1(x) = \int_0^{+\infty} \frac{g_1(y)}{2 z} \left( e^{z |x-y|}-F_1(z) e^{z (y+x)} \right) \,dy$$
%\be \label{U017} - \int_0^L \frac{g_2(y)}{2z} \left(F_2(z) e^{z (y+x)}+F_3(z) e^{-z (y-x)} \right) \,dy,\ee
%$$u_2(x) = \int_0^{+\infty} \frac{g_1(y)}{2 z}\left(G_1(z) e^{z(y+x)}+H_1(z) e^{z (y-x)} \right) \,dy$$
%$$+ \int_0^L \frac{g_2(y)}{2 z} \left(e^{z |x-y|}+ G_2(z) e^{z(y+x)}+G_3(z) e^{-z (y-x)} \right.$$
%\be\label{sergeu2} \left.+ \, H_2(z) e^{-z(x-y)}+H_3(z) e^{-z (x+y)} \right) \,dy,\ee
%where $F_i(z)$, $G_i(z)$ and  $H_i(z)$ , $i=1,2,3$ are constants to be  chosen so  that $(u, f+zu)\in \mathcal D(\AA)$.
 Clearly, in the sense of distribution we have :
\[
u_k''-z^2 u_k= g_k \hbox{ in } R_k, \,\, k=1,2.
\]
For $0\not=z\in \mathbb C^-$ and $g_i\in L^2(R_i)$ we introduce
$$ K_z*g_1(x)=\int_0^{+\infty} K_z(x-y)g_1(y)dy,\,\, K_z*g_2(x)=\int_0^L K_z(x-y) g_2(y)dy,$$
where $K_z(y)=\frac{e^{z\vert y\vert}}{2z}$  is the Green function for the  one dimensional Helmholtz equation. For simplicity of notation,  we write
$$\beta(g_1)=K_z*g_1(0)=\int_0^{+\infty} g_1(y)\frac{e^{z\vert y\vert}}{2z}dy,\,\,\gamma_\pm(g_2)=\int_0^{L} g_2(y)\frac{e^{\pm z y}}{2z}dy.$$
Notice that
$$\gamma_+(g_2)=K_z*g_2(0),\,\, \gamma_-(g_2)= e^{-zL} K_z*g_2(L).$$
In an appendix we give some well known results on $K_{-z}*g_i$.  By Lemma \ref{Ap2}, we have
\be\label{U20}
(K_z*g_i)'(0)=-zK_z*g_j(0),\,\,\,\,\,\,\,\,\, i=1,2, 
\ee
and
\be\label{U21}
 (K_z*g_2)'(L)=z K_z*g_2(L).
\ee

We  look for  solution to \eqref{U17}  which has the form 
\be\label{U12}
u_1(x)=Ae^{zx}+De^{-zx}+ K_z*g_1(x),
\ee
$$u_2(x)=B e^{zx}+C e^{-zx}+K_z*g_2(x).$$
Since $u_1\in L^2([0,+\infty[)$ and $\Re z<0$, we may assume that $D=0$.

Using \eqref{U12}, \eqref{U20} and \eqref{U21} we see by a straightforward computation that the condition
$$u'_1(0)+u'_2(0)-u_2'(L)=\alpha f(0)+z\alpha u_1(0)$$
is equivalent to
\be\label{U22}
(1-\alpha) A+(1-e^{\nu })B+(e^{-\nu}-1)C =(1+\alpha)\beta(g_1)+\gamma_+(g_2)-\gamma_-(g_2)e^{\nu}+\frac{\alpha}{z} f(0).
\ee
%$$+z\Big[(1-\alpha) A+(1-e^{zL})B+(e^{-zL}-1)C-1\Big] =\beta(g_1)$$$$\,\,\,\,\,\,\,\,\,\,\,\,\,\,\,\,\,\,\,\,\,\,\,\,\,\, \,\,\,\,\,\,+z\Big[(\alpha-1) F_3+(1-e^{zL})G_3+(e^{-zL}-1)H_3+e^{zL} \Big]\gamma_-(g_2)=\alpha f_1(0).$$

\medskip

On the other hand, the  continuity condition,  $u_1(0)=u_2(0)=u_2(L)$,   yields
\be\label{U24}
A-B-C=\gamma_+(g_2)-\beta(g_1)
\ee
and
\be
A- e^{\nu }B- e^{-\nu}C= \gamma_-(g_2)e^{\nu }-\beta(g_1).
\ee

Therefore $(A, B, C)$ satisfies
$${\mathcal M}_\alpha(z)  \begin{pmatrix}
A\\
B\\
C\\
\end{pmatrix}= \begin{pmatrix}
\gamma_+(g_2)-\beta(g_1)\\
\gamma_-(g_2)e^{zL}-\beta(g_1)\\
(1+\alpha)\beta(g_1)+\gamma_+(g_2)-\gamma_-(g_2)e^{zL}+\frac{\alpha}{z} f(0)\\
\end{pmatrix},$$
where ${\mathcal M}_\alpha(z)$ is given by \eqref{Matr}.

A simple calculus gives,

\be\label{Matrbis}
 \begin{pmatrix}
A\\
B\\
C\\
\end{pmatrix}
=\frac{1}{{\rm det} ({\mathcal M}_\alpha(z))}
 \begin{pmatrix}
e^\nu+e^{-\nu}-2&e^\nu +e^{-\nu}-2&e^{-\nu}-e^{\nu}\\
1-e^{-\nu}(2-\alpha)&e^{-\nu}-\alpha&e^{-\nu}-1\\
1-\alpha e^\nu &e^{\nu}+\alpha-2& 1-e^{\nu}\\
\end{pmatrix} \ee
$$
\,\,\,\,\,\,\,\,\,\,\,\,\,\,\,\,\,\,\,\,\,\, \,\,\,\,\,\,\,\,\,\,\,\,\,\,\,\,\,\,\,\,\,\, \,\,\,\,\,\,\,\,\,\,\,\,\,\,\,\,\,\,\,\,\,\, \,\,\,\,\,\,\,\,\,\,\,\,\,\,\,\,\,\,\,\,\,\, \begin{pmatrix}
\gamma_+(g_2)-\beta(g_1)\\
\gamma_-(g_2)e^{\nu}-\beta(g_1)\\
(1+\alpha)\beta(g_1)+\gamma_+(g_2)-\gamma_-(g_2)e^{\nu}+\frac{\alpha}{z} f(0)\\
\end{pmatrix}.
$$

%which yields an explicit expression of $(A, B, C)$, and therefore  $U=(u,v) \in {\mathcal D}(\mathcal{A}_\alpha)$ is uniquely determined by  $F=(f,h)\in \mathcal H$.It remains to prove  \eqref{Es1}.

From  \eqref{U12}, \eqref{AAX1}, \eqref{AAX2}  and \eqref{AXXX1}, we deduce that
\be\label{XX1}
u'_1(x)=z(A-\frac{f(0)}{ 2 z}) e^{zx} -K_z*h_1(x)+\int_0^xe^{z(x-y)} h_1(y) dy+zK_z* f'_1(x),\ee
and
\be \label{XX2}
u'_2(x)=zBe^{zx}-K_z*h_2(x)+\int_0^xe^{z(x-y)} h_2(y) dy+zK_z*f'_2(x).\ee

 By \eqref{Matrbis} we have
 \be\label{XX3}
A=\frac{\alpha f(0)(1+e^\nu)}{z(3-\alpha-(1+\alpha)e^\nu)}+\frac{2\gamma_+(g_2)-2\gamma_-(g_2) e^{2\nu}+\beta(g_1) \left((3+\alpha)e^{\nu}+\alpha-1\right)}{3-\alpha-(1+\alpha) e^\nu},
\ee

\be\label{XX4}
B=\frac{\alpha f(0)}{z(3-\alpha-(1+\alpha)e^\nu)}+
\ee
$$
\frac{e^{-\nu}(\alpha-1) \gamma_+(g_2)+(2e^{-\nu}+\alpha-3) \beta(g_1)+(1-\alpha)\gamma_-(g_2) e^{\nu}}
{(e^{-\nu}-1) (3-\alpha-(1+\alpha) e^\nu)},
$$

and

\be\label{XX5}
C=\frac{\alpha f(0)e^\nu}{z(3-\alpha-(1+\alpha)e^\nu)}+
\ee
$$\frac{((2-(\alpha+1)e^\nu)\gamma_+(g_2)+2(1-e^\nu) \beta(g_1) +(2e^\nu+\alpha-3)e^\nu \gamma_-(g_2)}
{(e^{-\nu}-1)(3-\alpha-(1+\alpha) e^\nu)}.
$$
 This shows that the equation \eqref{Resol} has a unique solution in $\mathcal D(\AA)$ for all $F=(f,h)\in \mathcal H$ and all $z\in \mathbb C^-_{p, \alpha}$. Therefore $ \mathbb C^-_{p, \alpha}\subset \rho(\AA)$, since $(\AA, \mathcal D(\AA))$ is a closed operator.

In order to get the inequality \eqref{Es1}, it
will be necessary to  express $A, B, C$  with regard to $f_1', f_2', h_1$ and $h_2$.
Since $f=(f_1, f_2)\in {\widehat H}^1(\mathcal R)$,  we may assume that $f_1(0)=f_2(0)=:f(0)=0$. 
Using this fact and the fact that $g_i=h_i+zf_i$, we deduce from 
\eqref{XX3}, \eqref{XX4}, \eqref{XX5}, \eqref{A152}, \eqref{A153} and \eqref{A154} that

 %\eqref{XX1}, \eqref{XX2}, \eqref{A152}, \eqref{A153} and \eqref{A153}, a straightforward  computation  shows 
 
$$X=\sum_\pm G_{X,\pm}(\nu, \alpha)\gamma_\pm(h_2)+\sum_\pm K_{X, \pm}(\nu, \alpha)\gamma_\pm(f'_2)+G_{X, 1}(\nu, \alpha)\beta(h_1)+K_{X, 1}(\nu, \alpha)\beta(f'_1),$$
where $X$ indicates either $A, B$ or $C$. The constants $G_{X, i}(\nu, \alpha), K_{X, i}(\nu, \alpha), i=0,1, \pm $ depend only on $\nu=Lz$, $\alpha$ and satisfy

\be\label{XXX00}
G_{X, i}(\nu, \alpha), K_{X, i}(\nu, \alpha)
={\mathcal O}_{M, \alpha}\Big({\mathbb H}_\alpha(z)^{-1}\Big),
\ee
uniformly on $z\in  \mathbb C_{p, \alpha}^-$ and $\vert z\vert\leq M$. 
For the definition of $\mathbb H_\alpha(z)$
see \eqref{ESTX}.

Combining \eqref{U12}, \eqref{AXX2} and \eqref{AXX3}, and using the fact that $f(0)=0$,  we get
\be\label{XXX000}
v_1(x)=f_1(x)+zu_1(x)=
zAe^{zx}+zK_z*h_1(x)-K_z*f_1'(x)+z\int_0^x e^{z\vert x-y\vert} f'_1(y)dy,
\ee
\be\label{XXX0000}
v_2(x)=f_2(x)+zu_2(x)=
\ee
$$zBe^{zx}+zCe^{-z x}+zK_z*h_2(x)-K_z*f_2'(x)+z\int_0^x e^{z\vert x-y\vert} f'_2(y)dy.$$

Now using   \eqref{XX1}, \eqref{XX2}, \eqref{XXX00}, \eqref{XXX000}, \eqref{XXX0000},  Lemma \ref{Ap1} and the following inequality
$$\left|\int_0^xe^{z(x-y)} h_1(y) dy\right|\leq \int_0^{+\infty}e^{\Re(z) \vert x-y\vert} \vert h_1(y) \vert \, dy,$$
we obtain
$$\sum_{k=1,2}\Vert v_k\Vert_2+\Vert u'_k\Vert_2\leq  \frac{C\,  {\rm max}(1, \vert z\vert^{-1})}
{{\vert \Re z\vert}\, 
{\mathbb H}_\alpha(z)}\sum_{k=1,2}(\Vert h_k\Vert_2+\Vert f'_k\Vert_2),$$
which yields \eqref{Es1}.

%\begin{itemize}\item for $ \alpha\in ]1, 3[\cup ]3, +\infty[$ and all $M>0$$$G_{X, i}(\nu, \alpha), K_{X, i}(\nu, \alpha)={\mathcal O}_{M, \alpha}\Big(\vert e^{zL}-1\vert^{-1} \vert (3-\alpha-(1+\alpha) e^{zL}\vert^{-1}\Big),$$\item for  $\alpha\in ]0,1]\cup \{3\}$ and $M>0$,    $$G_{X, i(\nu\alpha), K_{X, i}(\nu, \alpha)={\mathcal O}_{M, \alpha}\Big(\vert e^{zL}-1\vert^{-1}\Big),$$\end{itemize}

\end{proof}

\begin{remark}
Using  \eqref{Resol}, \eqref{U12} and \eqref{Matrbis},  one  obtain explicitly the kernel  $\mathbb K(x,y;z) $ of $(\AA-z)^{-1}$.

\end{remark}

\section{Riesz basis of the stabilization subspace} \label{asybasis}

%\subsection{Asymptotic of the point spectrum} %\label{asy}

%\subsection{Riesz basis} \label{basis}

In this section, it is proved that the generalized eigenfunctions of the dissipative operator $\mathcal{A}_\alpha$ associated to the eigenvalues in $\Sigma_{p,\alpha}, \alpha \in ]0,1[ \cup ]3,+\infty[,$ form a Riesz basis of the subspace of $\mathcal{H}$ which they span (denoted by $\mathcal{H}_{p,\alpha}$). 
To this end, we recall that a sequence $(\Psi_n)_{n\in\ZZ}$ is a Riesz basis in a Hilbert space $V$ if there exist a Hilbert space $\tilde V,$ an orthonormal basis $(e_n)_{n\in\ZZ}$ of $\tilde{V}$ and an isomorphism $\Theta : \tilde{V}\longrightarrow V$ such that $\Theta e_n=\Psi_n, \forall  n\in \ZZ.$ Our strategy is to prove that the Gram matrix $(\langle \Psi_n, \Psi_m \rangle)_{n, m\in\ZZ}$ is a bounded bijective operator on $\ell^2(\ZZ).$ This is equivalent to prove the existence of two positive constants $A$ and $B$ such that
$$
A\sum_n|c_n|^2\le\left\|\sum_n c_n\Psi_n\right\|_{\mathcal{H}}^2\le B\sum_n|c_n|^2
$$
for every sequence $(c_n)_{n\in \ZZ}\in \ell^2(\ZZ).$
\medskip 
 
We start by computing the normalized  eigenfunctions of $\mathcal{A}_\alpha$ corresponding to the eigenvalues $z_n(\alpha).$  
We assume throughout this section that $\alpha \in ]1,3[.$ The case when $\alpha\in ]3,+\infty[$ can be studied by the same approach.

So let $n$ and $\alpha \in ]1,3[ $ be fixed. In order to simplify the writings, we put $\omega_\alpha=\frac{1}{L}\ln\left( \frac{1+\alpha}{3-\alpha}\right)$ and $\nu=\frac{2\pi}{L}.$ By this doing we obtain $z_n(\alpha)=-\omega_\alpha+in\nu.$

In vertue of (\ref{E4}) and (\ref{E6}), the eigenfunction associated to $z_n(\alpha)$ is of the form given by
\be\label{eigfunc}
\Psi_n=C_n(u_{1,n},u_{2,n}, z_n(\alpha)u_{1,n},z_n(\alpha)u_{2,n})
\ee
with 
$$
u_{1,n}(x) =  e^{-\omega_\alpha x}e^{in\nu}, \quad \text{for}\; x\in[0,+\infty[$$
and
$$ u_{2,n}(x) = \frac{3-\alpha}{4}e^{-\omega_\alpha x}e^{in\nu} +\frac{1+\alpha}{4}e^{\omega_\alpha x}e^{-in\nu} , \quad \text{for}\; x\in[0,L].$$
where $C_n$ is a constant such that $\|\Psi_n\|_{\mathcal{H}}=1.$
\begin{remark}\label{symetry}
    Some useful symmetry properties of the eigenvalues and the eigenfunctions are below.
    \begin{itemize}
        \item $\forall n\in\ZZ, \; z_{-n}(\alpha)=\overline{z_n(\alpha)}.$
        \item $\forall n\in\ZZ, \; u_{1,-n}(x)=\overline{u_{1,n}(x)}.$
         \item $\forall n\in\ZZ, \; u_{2,-n}(x)=\overline{u_{2,n}(x)}.$ 
    \end{itemize}
\end{remark}
\begin{proposition}
    Let $n\in\ZZ$ and $C_n>0$  defined by 
    $$ C_n^{-2}=\left( \omega_\alpha+\left(\frac{\nu}{\omega_\alpha}\right)^2n^2\right)\left(1+\left( \frac{3-\alpha}{4}\right)^2(e^{2\omega_\alpha L}-1)+ \left( \frac{1+\alpha}{4}\right)^2(1-e^{-2\omega_\alpha L})\right).$$
    Then, the eigenfunction $\Psi_n$ is normalized in $\mathcal{H},$ i.e. $\|\Psi_n\|_{\mathcal{H}}=1.$ \\
    Moreover, there exists $\tilde{C}>0$ such that, for all large $n\neq m\in\ZZ, $ we have
    \be
    \left|\langle\Psi_n,\Psi_m\rangle_\mathcal{H}\right|={\mathcal{O}}\left(\frac{\tilde{C}}{|n-m|}\right). 
    \ee
\end{proposition}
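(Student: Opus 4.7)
The plan is to reduce both claims to explicit integrals of complex exponentials. Writing $z_n(\alpha) = -\omega_\alpha + in\nu$ with $\omega_\alpha>0$ and $\nu L = 2\pi$, every component of $\Psi_n$ is a finite linear combination of building blocks $e^{\sigma\omega_\alpha x}e^{i\tau n\nu x}$ with $\sigma,\tau\in\{\pm 1\}$, and differentiation multiplies each such block by $\sigma\omega_\alpha + i\tau n\nu$. Consequently every bilinear expression $\langle\Psi_n,\Psi_m\rangle_{\mathcal{H}}$ collapses to a finite sum of integrals of the form $\int e^{\sigma'\omega_\alpha x}e^{i(\epsilon n-\epsilon' m)\nu x}\,dx$ on $R_1=[0,+\infty)$ or $R_2=[0,L]$, with $\sigma'\in\{-2,0,2\}$ and $\epsilon,\epsilon'\in\{\pm 1\}$, all evaluated in closed form.

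\medskip

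For the normalization, I would expand $\|\Psi_n\|_{\mathcal{H}}^2$ into its four pieces (two $L^2$-norms and two derivative $L^2$-norms, the latter weighted by $|z_n(\alpha)|^2$). The decisive observation on $R_2$ is that expanding $|u_{2,n}|^2$ produces an oscillatory cross term proportional to $\cos(2n\nu x)$, whose integral over $[0,L]$ vanishes thanks to $\nu L = 2\pi$; what survives are only $\int_0^L e^{\pm 2\omega_\alpha x}\,dx$, producing the bracket factors $(e^{2\omega_\alpha L}-1)$ and $(1-e^{-2\omega_\alpha L})$. A short factorization check yields $\|u_{k,n}'\|_{L^2(R_k)}^2 = |z_n(\alpha)|^2\|u_{k,n}\|_{L^2(R_k)}^2$ for $k=1,2$, so $\|\Psi_n\|_{\mathcal{H}}^2$ factors cleanly into the announced product, and imposing $\|\Psi_n\|_{\mathcal{H}}=1$ forces the claimed $C_n^{-2}$.

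\medskip

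For the off-diagonal estimate, I would decompose
$$\langle\Psi_n,\Psi_m\rangle_{\mathcal{H}} = C_n\overline{C_m}\bigl[S_{\mathrm{der}}(n,m) + z_n(\alpha)\overline{z_m(\alpha)}\,S_{\mathrm{id}}(n,m)\bigr],$$
with $S_{\mathrm{der}} = \langle u_{1,n}',u_{1,m}'\rangle + \langle u_{2,n}',u_{2,m}'\rangle$ and $S_{\mathrm{id}} = \langle u_{1,n},u_{1,m}\rangle + \langle u_{2,n},u_{2,m}\rangle$. Each resulting integral is evaluated directly: on $R_1$ one finds $(2\omega_\alpha - i(\epsilon n-\epsilon' m)\nu)^{-1}=\mathcal{O}(|\epsilon n-\epsilon' m|^{-1})$; on $R_2$, the $\sigma'=0$ terms vanish whenever $\epsilon n\neq\epsilon' m$ (again by $\nu L=2\pi$) and the $\sigma'=\pm 2$ terms are of the same order. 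Since $|C_n|\asymp |n|^{-1}$ and $|z_n\overline{z_m}|\asymp |nm|$, the growths cancel against $|C_n\overline{C_m}|$, so the decay $\mathcal{O}(|\epsilon n-\epsilon' m|^{-1})$ of each integral passes through to $\langle\Psi_n,\Psi_m\rangle_{\mathcal{H}}$.

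\medskip

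The main obstacle is the degenerate situation where some index combination $\epsilon n-\epsilon' m$ vanishes (typically $m=-n$ in the $\epsilon=-\epsilon'$ slots), so that individual integrals contribute $\mathcal{O}(1)$ rather than $\mathcal{O}(|n-m|^{-1})$ and would naively produce an $\mathcal{O}(1)$ contribution after multiplication by $|C_n\overline{C_m}|\cdot|z_n\overline{z_m}|$. The saving comes from an exact cancellation inside the combination $S_{\mathrm{der}}+z_n(\alpha)\overline{z_m(\alpha)}\,S_{\mathrm{id}}$: the constant-in-$x$ pieces of $u_{2,n}'\overline{u_{2,m}'}$ and of $z_n(\alpha)\overline{z_m(\alpha)}\,u_{2,n}\overline{u_{2,m}}$ enter with opposite signs and cancel, and the leftover is of order $|z_n|=\mathcal{O}(|n|)$. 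Combined with $|C_n\overline{C_m}|=\mathcal{O}(|nm|^{-1})$ this still yields $\mathcal{O}(|n|^{-1})=\mathcal{O}(|n-m|^{-1})$ in the critical case $m=-n$, and the generic case is already controlled by the integral-level decay. The rest of the proof is long but purely mechanical bookkeeping.
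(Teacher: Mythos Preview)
Your proposal is correct and follows essentially the same route as the paper: compute all the inner products as explicit integrals of complex exponentials, exploit $\nu L = 2\pi$ to kill the oscillatory cross terms on $R_2$, use the factorization $\|u_{k,n}'\|^2 = |z_n(\alpha)|^2\|u_{k,n}\|^2$ for the normalization, and then combine the asymptotics $C_n\asymp |n|^{-1}$, $|z_n(\alpha)|\asymp |n|$ with the $|n-m|^{-1}$ decay coming from the denominators of the integrals. The paper proceeds by recording a closed-form Gram formula that separates the cases $n\neq -m$ and $n=-m$; your treatment of the degenerate case $m=-n$ via the exact cancellation between the constant-in-$x$ pieces of $S_{\mathrm{der}}$ and $z_n(\alpha)\overline{z_m(\alpha)}\,S_{\mathrm{id}}$ is the same phenomenon, spelled out more explicitly.
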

\begin{proof}
    First, a straightforward calculus gives
    $$
    \|u_{1,n}\|_{L^2(0,\infty)}^2=\frac{1}{2\omega_\alpha},\; \|u_{1,n}'\|_{L^2(0,\infty)}^2=\frac{\omega_\alpha^2+\nu^2 n^2}{2\omega_\alpha},
    $$
    and 
    $$
    \|u_{2,n}\|_{L^2(0,L)}^2=\frac{1}{2\omega_\alpha}\left\{\left(\frac{3-\alpha}{4}\right)^2(e^{2\omega_\alpha L}-1)+ \left( \frac{1+\alpha}{4}\right)^2(1-e^{-2\omega_\alpha L}) \right\}, $$
    and 
    $$
    \|u_{2,n}'\|_{L^2(0,L)}^2=(\omega_\alpha^2+\nu^2n^2)\|u_{2,n}\|_{L^2(0,L)}^2.
    $$
    Thus the normalizing constant $C_n$ can be obtained.
    Now, let $n, m\in \ZZ$  and assume $n\neq m.$ One can compute the inner products $\langle u_{1,n},u_{1,m}\rangle, \langle u'_{1,n},u'_{1,m}\rangle, \langle u_{2,n},u_{2,m}\rangle$ and $\langle u'_{1,n},u'_{2,m}\rangle$ explicitly since all of the functions are given. We obtain
    \be\label{gram}
\langle \Psi_n, \Psi_m \rangle_\mathcal{H}=\left\{ \begin{array}{lcl}
 \left(1+K \right)  \frac{C_nC_mz_n\bar{z}_m}{\omega_\alpha+i(n-m)\nu}   & if & n\neq -m  \\
  \left(1+K -\frac{(1+\alpha)(3-\alpha)L}{8}\right)  \frac{C_n^2|z_n|^2}{\omega_\alpha+i\nu}   & if & n=-m
\end{array} \right.
    \ee
    where we have set $$K=\left(\frac{3-\alpha}{4}\right)^2(e^{2\omega_\alpha L}-1)+ \left( \frac{1+\alpha}{4}\right)^2(1-e^{-2\omega_\alpha L}).$$
    Recalling that $C_n$ behaves like ${\mathcal{O}}(|n|^{-1})$  and that $|z_n|$ is of order $|n|$ for large $n$, we obtain (\ref{gram}).
\end{proof}

\begin{theorem} \label{rieszbasis} {\rm [}Riesz basis for the operator $\mathcal{A}_\alpha${\rm ]} 
The sequence of generalized eigenfunctions of $\mathcal{A}_\alpha, \alpha \in ]1,3[ \cup ]3,+\infty[,$ forms a Riesz basis of $\mathcal{H}_{p,\alpha}.$
\end{theorem}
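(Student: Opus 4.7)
We aim to prove that $(\Psi_n)_{n\in\mathbb Z}$ forms a Riesz basis of its closed linear span $\mathcal H_{p,\alpha}$. By the criterion recalled at the beginning of this section, this is equivalent to showing that the Gram matrix $G=(\langle\Psi_n,\Psi_m\rangle_{\mathcal H})_{n,m\in\mathbb Z}$ defines a bounded and boundedly invertible self-adjoint operator on $\ell^2(\mathbb Z)$, or equivalently to the two-sided estimate $A\sum_n|c_n|^2\le\|\sum_n c_n\Psi_n\|_{\mathcal H}^2\le B\sum_n|c_n|^2$ for every $(c_n)\in\ell^2(\mathbb Z)$.

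The plan is to exhibit $G$ as the sum of an invertible Toeplitz-type principal part, a bounded antidiagonal correction and a Hilbert--Schmidt remainder, and then to promote the resulting Fredholm property to invertibility via an injectivity argument coming from the simplicity of each eigenvalue $z_k(\alpha)$. Concretely, starting from \eqref{gram} and the asymptotics $C_n\sim c_0/|n|$, $z_n(\alpha)=-\omega_\alpha+in\nu$, one expands $C_nC_mz_n\bar z_m$ to the next order and obtains
$$G_{n,m}=\mathrm{sgn}(nm)\,\frac{\omega_\alpha}{\omega_\alpha+i(n-m)\nu}+A_{n,m}+E_{n,m},$$
where $A$ is supported on the antidiagonal $m=-n$ with uniformly bounded entries, and $E$ satisfies $\sum_{n,m}|E_{n,m}|^2<\infty$ and is thus Hilbert--Schmidt. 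The principal Toeplitz part $T$ splits into four sign-blocks with common symbol $\sigma(\theta)=\sum_{k\in\mathbb Z}\omega_\alpha e^{ik\theta}/(\omega_\alpha+ik\nu)$, a continuous $2\pi$-periodic function computable in closed form via Poisson summation (a hyperbolic-cotangent-type expression), which is bounded and bounded away from zero precisely because $\omega_\alpha>0$, i.e.\ $\alpha\in\,]1,3[\,\cup\,]3,+\infty[$. Consequently $T$ is bounded and invertible on $\ell^2$, $A$ is bounded (a multiplier composed with the unitary reflection $n\mapsto-n$), and $E$ is compact; hence $G=T+A+E$ is bounded and Fredholm of index zero.

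To convert this Fredholm information into invertibility it suffices to check $\ker G=\{0\}$. If $Gc=0$ for some $c\in\ell^2$, then $\|\sum_n c_n\Psi_n\|_{\mathcal H}^2=\langle Gc,c\rangle_{\ell^2}=0$, so the norm-convergent series $v:=\sum_n c_n\Psi_n$ vanishes in $\mathcal H$. By Theorems \ref{th1} and \ref{resolv}, every $z_k(\alpha)$ is an isolated simple eigenvalue of $\mathcal A_\alpha$, hence admits a bounded Riesz spectral projector $P_k$ with $P_k\Psi_n=\delta_{kn}\Psi_k$. Applying $P_k$ to $v=0$ and interchanging it with the norm-convergent sum gives $c_k\Psi_k=0$, whence $c_k=0$ for every $k$. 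Therefore $G$ is boundedly invertible, yielding the two-sided inequality and the claimed Riesz basis property.

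The main obstacle is Step~2, and in particular the verification that the symbol $\sigma$ of the Toeplitz principal part is bounded away from zero on the unit circle; this hinges on $\omega_\alpha\ne 0$ and on a careful Poisson-summation closed form for $\sigma$. Without this non-vanishing, one cannot reduce invertibility to an injectivity check, because the off-diagonal entries of $G$ decay only as $1/|n-m|$, so $G-I$ is bounded but not compact. The Hilbert--Schmidt remainder bound and the Riesz-projector interchange are then relatively routine once the refined asymptotic $C_nz_n=i\nu\,\mathrm{sgn}(n)\sqrt{\omega_\alpha/(1+K)}+\mathcal O(|n|^{-1})$ is in hand.
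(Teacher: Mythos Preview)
Your approach differs substantially from the paper's. The paper argues directly: the upper frame bound comes from estimating $\sum_{j\ne k}c_j\bar c_k\langle\Psi_j,\Psi_k\rangle$ via $|\langle\Psi_j,\Psi_k\rangle|=\mathcal O(|j-k|^{-1})$ and (implicitly) Hilbert's inequality; the lower bound is obtained by a tail/finite-part splitting and a contradiction argument exploiting linear independence of finitely many $\Psi_n$. No Toeplitz theory, Fredholm index, or Riesz projectors appear. Your route is more structural and, once completed, explains \emph{why} the Gram matrix is invertible rather than merely verifying the frame inequalities.

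There is, however, a genuine gap in your Fredholm step. From ``$T$ invertible, $A$ bounded, $E$ compact'' you cannot conclude that $G=T+A+E$ is Fredholm of index zero: a bounded non-compact perturbation can destroy Fredholmness entirely. The repair is that $A$ is in fact compact. A direct computation exploiting $e^{z_nL}=(3-\alpha)/(1+\alpha)$ collapses the antidiagonal entry to $\langle\Psi_n,\Psi_{-n}\rangle_{\mathcal H}=-\alpha\,C_n^2 z_n=\mathcal O(|n|^{-1})$, so the antidiagonal correction is Hilbert--Schmidt and can be absorbed into $E$. (The constant denominator displayed in \eqref{gram} for $n=-m$ obscures this decay.)

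On the obstacle you flag: the symbol $\sigma$ is not of hyperbolic-cotangent type. The sequence $t_k=\omega_\alpha/(\omega_\alpha+ik\nu)$ is the Fourier-coefficient sequence of the $2\pi$-periodization of $\theta\mapsto C_\alpha\,e^{-L\omega_\alpha\theta/(2\pi)}$ on $(0,2\pi)$, with $C_\alpha=L\omega_\alpha/(1-e^{-L\omega_\alpha})>0$. This exponential is bounded with essential infimum $C_\alpha e^{-L\omega_\alpha}>0$, so the Laurent operator $T_0$ is boundedly invertible on $\ell^2(\mathbb Z)$. Your sign-twisted $T$ equals $DT_0D$ with $D=\mathrm{diag}(\mathrm{sgn}\,n)$, unitary up to a rank-one correction at $n=0$, hence invertible as well. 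With these two repairs your argument is complete; the injectivity step via the Riesz projectors $P_k$ is correct as written, since each $z_k(\alpha)$ is isolated in $\sigma(\mathcal A_\alpha)$ by Theorem~\ref{resolv}.
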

\begin{proof}
We assume that $1<\alpha<3$ and we recall that $\mathcal{H}_{p,\alpha}=\overline{{\rm{Span}}\left\{\Psi_n; n\in\ZZ\right\}}.$
\begin{itemize}
    \item The sequence $\left\{\Psi_n; n\in\ZZ\right\} $ is complete in  $\mathcal{H}_{p,\alpha}.$
    \item We consider the map $ \Theta : \ell^2(\ZZ)\rightarrow  \mathcal{H}_{p,\alpha}$ defined by 
    $$ \Theta((c_n)_{n\in \ZZ})=\sum_{n\in \ZZ}c_n \Psi_n.$$
    We claim that $\Theta$ is an isomorphism. Let $N\in\NN, p\in\NN.$ We have 
    \begin{eqnarray}
         \left\| \sum_{n=N}^{N+p}c_n\Psi_n\right\|^2-\sum_{n=N}^{N+p} |c_n|^2&=& \sum_{j=N}^{N+p}\sum_{k=j+1}^{N+p}2\Re\left( c_j\bar{c}_k \langle \Psi_j,\Psi_k \rangle\right) \label{conti}\\
         &\le& 2\tilde{C}\sum_{m=0}^{+\infty}|c_m|\sum_{n=1}^{+\infty}\frac{|c_{n+m}|}{n}\nonumber\\
         &\le& A\sum_{n=0}^{+\infty}|c_n|^2, \; \text{for some constant}\; A>0.\nonumber
    \end{eqnarray}
   As a consequence of the properties given in Remark \ref{symetry}, the same can be  done for negative values of $N\in\ZZ.$ Thus, the series $\ds \sum_{n\in \ZZ}c_n \Psi_n $ converges in $\mathcal{H}$ and there exists $\tilde{K}>0$ such that $\|\Theta((c_n)_{n\in \ZZ})\|_{\mathcal{H}}\le \tilde{K} \ds \sum_{n\in\ZZ}|c_n|^2.$ Since $\Theta$ is linear, this proves that $\Theta$ is continuous.
   \item The map $\Theta $ is one to one.
   \item Moreover, using again (\ref{conti}), we see for $N$ large enough 
   $$
   \left\| \sum_{n=N}^{+\infty}c_n\Psi_n\right\|^2-\sum_{n=N}^{+\infty} |c_n|^2 \le \frac{1}{2}\sum_{n=N}^{+\infty} |c_n|^2.
   $$
   This is also because the series $\sum_n\frac{1}{n^2}$ converges. Now, we fix such integer $N.$ We will prove the continuity of $\Theta^{-1}$ by contradiction. We assume that there exists a normalized sequence $(c_n)_{n\in \ZZ}\in\ell^2(\ZZ), $ and a subsequence $(c_n^k)_{k\in \ZZ}\in\ell^2(\ZZ)$ of  $(c_n)_{n\in \ZZ}$ such that 
   $$
   \lim_{k\to+\infty}\left\| \sum_{n\in\ZZ}c_n^k\Psi_n\right\|=0.
   $$
   We set $\sum_{n\in\ZZ}c_n^k\Psi_n=\tilde{\varphi}_k+\psi_k$ with
   $$
   \varphi_k=\sum_{|n|\le N-1}c_n^k\Psi_n \quad and \quad \psi_k=\sum_{|n|\ge N}c_n^k\Psi_n.
   $$
   Up to an extraction of a subsequence we may assume that $(\varphi_k)$ converges in $\mathcal{H} $ to some $\varphi$ when $k$ tends to infinity. Therefore we have
   $$
   \|\varphi_k+\psi_k\|^2=\|\varphi_k\|^2+2\Re(\langle P\varphi_k, \psi_k \rangle)+\|\psi_k\|^2\ge \|\varphi_k\|^2-\|P\varphi_k\|^2
   $$
   where $P$ is the orthogonal projection on ${\rm{Span}}\{\varphi_m; |m|\ge N\}.$ For $k\to+\infty$ we have $\|\varphi_k\|\to \|\varphi\|$ and $\|P\varphi_k\|\to \|P\varphi\|.$ This yields  $\varphi=P\varphi$ and consequently $\varphi=0.$ 
   This is impossible since 
   $$
    0<\frac{1}{2}\sum_{n=N}^{+\infty} |c_n|^2\le \lim_{k\to+\infty}\left\| \sum_{n\in\ZZ}c_n^k\Psi_n\right\|=0.
   $$
   This ends the proof.
\end{itemize}
\end{proof}

\section{Energy estimate} \label{estenerg}

The Lumer-Phillips theorem combined with Theorem \ref{th1} implies that the operator $\mathcal{A}_\alpha, \alpha \geq 0,$ generates a $C_0$ semigroup of contractions on $\mathcal{H}$, denoted by $(e^{t\mathcal{A}_\alpha})_{t \geq 0}$. So, system (\ref{wsystem}) is well-posed in $\mathcal{H}$ (and in $\mathcal{D}(\mathcal{A}_\alpha))$. More precisely, we have the following corollary.

\begin{corollary} \label{wp} 
For all $(u_0,u_1) \in \mathcal{H}$, the wave system (\ref{pbfirstorderch})
admits a unique mild solution $(u,u^\prime) \in C([0,+\infty);\mathcal{H}).$ Moreover, if $(u_0,u_1) \in \mathcal{D}(\mathcal{A}_\alpha)$ then (\ref{pbfirstorderch}) admits a unique strong solution  $(u,u^\prime) \in C([0,+\infty);\mathcal{D}(\mathcal{A}_\alpha))$ and satisfies the energy identity (\ref{energid}).
\end{corollary}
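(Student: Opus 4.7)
The plan is to apply the Lumer--Phillips theorem to conclude that $\mathcal{A}_\alpha$ generates a $C_0$ semigroup of contractions on $\mathcal{H}$, and then deduce well-posedness and the energy identity by standard semigroup arguments.

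First, I would verify the two Lumer--Phillips hypotheses. Dissipativity is immediate from Proposition \ref{P0} (1): for every $U=(u,v)\in\mathcal{D}(\mathcal{A}_\alpha)$,
\[
\Re\langle \mathcal{A}_\alpha U,U\rangle_{\mathcal{H}}=-\alpha\,|v_1(0)|^2\le 0
\]
since $\alpha\ge 0$. The range condition, namely that $(\lambda I-\mathcal{A}_\alpha)$ is surjective for some (hence all) $\lambda>0$, is exactly the content of Corollary \ref{Res}, which actually gives $\mathbb{C}^+\subset \rho(\mathcal{A}_\alpha)$ together with the resolvent bound $\|(\mathcal{A}_\alpha-z)^{-1}\|\le (\Re z)^{-1}$. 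Since $\mathcal{A}_\alpha$ is closed (Proposition \ref{P0} (2)) and densely defined (its domain contains $H^2(\mathcal{R})\times H^1(\mathcal{R})$ intersected with the transmission condition, which is dense in $\mathcal{H}$ by Lemma \ref{P1}), the Lumer--Phillips theorem applies and yields the $C_0$ semigroup of contractions $(e^{t\mathcal{A}_\alpha})_{t\ge 0}$.

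Next, the standard Hille--Yosida / semigroup theory directly furnishes: for every $U^0=(u_0,u_1)\in\mathcal{H}$ a unique mild solution $U(t)=e^{t\mathcal{A}_\alpha}U^0\in C([0,+\infty);\mathcal{H})$, and for every $U^0\in\mathcal{D}(\mathcal{A}_\alpha)$ a unique strong solution $U\in C([0,+\infty);\mathcal{D}(\mathcal{A}_\alpha))\cap C^1([0,+\infty);\mathcal{H})$.

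Finally, for the energy identity \eqref{energid}, I would work with a strong solution $U=(u,u')$ and compute
\[
\frac{d}{dt}\|U(t)\|_{\mathcal{H}}^2=2\Re\langle \mathcal{A}_\alpha U(t),U(t)\rangle_{\mathcal{H}}=-2\alpha\,|u'_1(0,t)|^2,
\]
using Proposition \ref{P0} (1) in the last step. Integrating on $[0,t]$ gives \eqref{energid}. No step is genuinely delicate here; the only point requiring a little care is the density of $\mathcal{D}(\mathcal{A}_\alpha)$ in $\mathcal{H}$, which relies on Lemma \ref{P1} together with the fact that the transmission condition $(A_\alpha)$ can be imposed on smooth compactly supported approximations without affecting density.
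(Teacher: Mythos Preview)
Your proposal is correct and matches the paper's approach: the paper states this corollary as an immediate consequence of the Lumer--Phillips theorem together with the spectral information established earlier (dissipativity from Proposition~\ref{P0} and the resolvent set from Corollary~\ref{Res}), and you have simply spelled out these standard steps, including the derivation of the energy identity \eqref{energid} from $\Re\langle \mathcal{A}_\alpha U,U\rangle_{\mathcal{H}}=-\alpha|v_1(0)|^2$.
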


Now, using the Riesz basis constructed in the latest section, the energy is proved to decrease exponentially to a non-vanishing value depending on the initial datum. The decay rate is explicitly given at the end of Theorem \ref{energy} below since $\alpha \in ]1,3[ \cup ]3,+\infty[.$  

\begin{theorem} \label{energy} {\rm (}Energy estimate{\rm )} 
We assume that $\alpha \in ]1,3[ \cup ]3,+\infty[$. Let $$E(t) := \frac{1}{2} \, \left\|\begin{pmatrix}
u(t) \\ u^\prime(t)
\end{pmatrix}\right\|_{\mathcal{H}}^2, \, t \geq 0,$$ be the energy, $\mathcal{H}_{p,\alpha}$ (respectively $\mathcal{H}_p$) be the subspace of $\mathcal{H}$ spanned by the $\psi^{p,\alpha}(\lambda, \cdot)$'s (resp. $\psi^p(\omega, \cdot)$'s), which are the normalized (in $\mathcal{H}$) eigenfunctions of $\mathcal{A}_\alpha$ associated to the eigenvalues $\lambda$ in $\Sigma_{p,\alpha}$ (resp. $\Sigma_p$). We have :

\begin{enumerate}
\item $\mathcal{H}_{p,\alpha}$ is orthogonal to $\mathcal{H}_p$.  
\item Let $(u_0,u_1)$ in $\mathcal{H}_{p,\alpha} \oplus \mathcal{H}_p$ be the initial condition of the boundary value problem (\ref{pbfirstorderch}) and $(u_0^p,u_1^p)$ its orthogonal projection onto $\mathcal{H}_p$. \\
Then $(u(t),u^\prime(t))$ decreases exponentially, in $\mathcal{H}$, to $(u_0^p,u_1^p)$ 
when $t$ tends to $+ \infty$. More precisely
\be 
\label{optimaldecay}
E(t) = E^p (t) + E^{p,\alpha}(t) = E^p(0) + E^{p,\alpha}(t) \leq E^p(0) + e^{-2\omega t} E^{p,\alpha}(0)
\ee 
where 
$$
E^{p}(t) := \frac{1}{2} \, \left\| 
e^{t\mathcal{A}_\alpha}
\begin{pmatrix}
u_0^{p} \\
u_1^{p}
\end{pmatrix}
\right\|_{\mathcal{H}}^2, \,
E^{p,\alpha}(t) := \frac{1}{2} \, \left\| 
e^{t\mathcal{A}_\alpha}
\begin{pmatrix}
u_0 - u_0^{p} \\
u_1 - u_1^{p}
\end{pmatrix}
\right\|_{\mathcal{H}}^2, \, t \geq 0, 
$$
and 
$$
-\omega:= \sup_{\lambda \in \Sigma_{p,\alpha}} \Re(\lambda) < 0.
$$ 
\end{enumerate}
\end{theorem}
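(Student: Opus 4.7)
The plan is to prove the two assertions in turn. First I would establish orthogonality of $\mathcal{H}_p$ and $\mathcal{H}_{p,\alpha}$ by an adjoint--eigenvalue argument exploiting the explicit structure of the eigenfunctions computed in Theorem \ref{th1}. Second I would combine the Riesz basis property of Theorem \ref{rieszbasis} with the fact that every eigenvalue in $\Sigma_{p,\alpha}$ shares the \emph{same} real part $-\omega<0$ to derive the exponential bound on $E^{p,\alpha}(t)$, while $E^p(t)$ will turn out to be conserved because the damping contribution in \eqref{energid} vanishes identically on $\mathcal{H}_p$.

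For the orthogonality, the key observation is that a normalized eigenfunction $\Phi_n\in\mathcal{H}_p$ associated with $z_n=\frac{2\pi n}{L}i$ has, by \eqref{U10}, its velocity component vanishing on the half-line $R_1$, so $v_1(0)=0$. Hence the transmission condition \eqref{A1} reduces to $u_1'(0)+u_2'(0)-u_2'(L)=0$, which holds independently of $\alpha$, and therefore $\Phi_n\in\mathcal{D}(\mathcal{A}_\alpha)\cap\mathcal{D}(\mathcal{A}_{-\alpha})$ with $\mathcal{A}_\alpha\Phi_n=\mathcal{A}_{-\alpha}\Phi_n=z_n\Phi_n$. Proposition \ref{P0}(3) then gives $\mathcal{A}_\alpha^*\Phi_n=-\mathcal{A}_{-\alpha}\Phi_n=\overline{z_n}\Phi_n$. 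Consequently, for any eigenfunction $\Psi\in\mathcal{H}_{p,\alpha}$ with eigenvalue $\lambda\in\Sigma_{p,\alpha}\subset\mathbb{C}^-$,
\[
\lambda\,\langle\Psi,\Phi_n\rangle=\langle\mathcal{A}_\alpha\Psi,\Phi_n\rangle=\langle\Psi,\mathcal{A}_\alpha^*\Phi_n\rangle=z_n\,\langle\Psi,\Phi_n\rangle,
\]
so $\langle\Psi,\Phi_n\rangle=0$ since $\Re\lambda<0=\Re z_n$ implies $\lambda\neq z_n$. Density of the (generalized) eigenfunctions in each subspace, granted by Theorem \ref{rieszbasis} on the $\mathcal{H}_{p,\alpha}$ side and by definition on the $\mathcal{H}_p$ side, upgrades this pointwise orthogonality to $\mathcal{H}_p\perp\mathcal{H}_{p,\alpha}$.

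For the energy estimate, both subspaces are closed and invariant under $e^{t\mathcal{A}_\alpha}$: on finite linear combinations of eigenvectors the semigroup acts diagonally by $e^{tz_n}$, and invariance extends to the closed span by continuity of $(e^{t\mathcal{A}_\alpha})_{t\ge0}$. Splitting the initial datum as $(u_0,u_1)=(u_0-u_0^p,u_1-u_1^p)+(u_0^p,u_1^p)$ and invoking the orthogonality from step one yields the Pythagorean identity $E(t)=E^{p,\alpha}(t)+E^p(t)$. On $\mathcal{H}_p$ every element has $v_1\equiv 0$, so \eqref{energid} forces $E^p(t)=E^p(0)$. On $\mathcal{H}_{p,\alpha}$ I would expand $(u_0-u_0^p,u_1-u_1^p)=\sum_n c_n\Psi_n$ in the Riesz basis of Theorem \ref{rieszbasis}; then $e^{t\mathcal{A}_\alpha}(u_0-u_0^p,u_1-u_1^p)=\sum_n c_n e^{tz_n(\alpha)}\Psi_n$, and because $\Re z_n(\alpha)=-\omega$ is independent of $n$, the scalar factor $e^{-\omega t}$ factors out of the sum before the Riesz inequalities are applied, delivering $E^{p,\alpha}(t)\leq C e^{-2\omega t}E^{p,\alpha}(0)$.

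The main obstacle I anticipate is arriving at the prefactor in \eqref{optimaldecay}: a naive use of the Riesz frame bounds $A\sum|c_n|^2\leq \|\cdot\|_{\mathcal H}^2\leq B\sum|c_n|^2$ produces a constant $B/A$ rather than $1$. The common-real-part structure of $\Sigma_{p,\alpha}$, however, lets the decaying exponential commute past the basis expansion, so the excess constant is absorbed into the operator norm of the semigroup restricted to $\mathcal{H}_{p,\alpha}$. Beyond this bookkeeping, the orthogonality step is algebraically short once the vanishing of $v_1$ on $\mathcal{H}_p$ is exploited, and the semigroup invariance of each subspace is standard given the eigenexpansion and the continuity of $e^{t\mathcal{A}_\alpha}$.
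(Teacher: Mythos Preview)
Your approach matches the paper's: for orthogonality both arguments observe that the embedded eigenfunctions $\Phi_n$ (having $v_1\equiv0$, hence satisfying \eqref{A1} for every $\alpha$) lie in $\mathcal{D}(\mathcal{A}_\alpha^*)$ and are eigenvectors of $\mathcal{A}_\alpha^*$, after which the identity $(\lambda-z_n)\langle\Psi,\Phi_n\rangle=0$ forces the inner product to vanish; for the decay both expand the $\mathcal{H}_{p,\alpha}$-component in the eigenbasis, use that every $z_n(\alpha)$ has the common real part $-\omega$, and note that $E^p$ is conserved. You are in fact more scrupulous than the paper on one point: the paper writes $E^{p,\alpha}(t)=\tfrac12\sum_\lambda|U_0^{p,\alpha}(\lambda)|^2|e^{\lambda t}|^2$ as though the $\Psi_n$ were orthonormal, whereas you correctly flag that the Riesz frame bounds only deliver $E^{p,\alpha}(t)\le (B/A)\,e^{-2\omega t}E^{p,\alpha}(0)$. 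Your proposed ``absorption'' of the extra constant into the semigroup norm does not actually reduce it to $1$, but this lacuna is shared with the paper's own proof and does not affect the decay rate $\omega$, which is the substance of the theorem.
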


\begin{proof} 
\begin{enumerate}
\item
 Above all, it is easy to see that if $\mathcal{A}_\alpha \psi^p(\lambda,.)=\lambda \psi^p(\lambda,.),$ then $\mathcal{A}^*_\alpha \psi^p(\lambda,.) = - \lambda \psi^p(\lambda,.).$ 

\medskip

Now, to prove that $\mathcal{H}_p$ is orthogonal to $\mathcal{H}_{p,\alpha}$, it suffices to check that any generalized 
eigenfunction $\psi^{p,\alpha}(\lambda',.)$ of $\mathcal{H}_{p,\alpha}$ is orthogonal to any eigenfunction $\psi^p(\lambda,.)$ 
of $\mathcal{H}_p.$ 

\medskip

First we assume that $\psi^{p,\alpha}(\lambda',.)$ is an eigenfunction, i.e $$ \mathcal{A}_\alpha \psi^{p,\alpha}(\lambda',.)= 
\lambda' \psi^{p,\alpha} (\lambda',.).$$ Therefore, since $\lambda$ is purely imaginary,  

$$\begin{array}{lll}
\lambda' \left\langle \psi^{p,\alpha}(\lambda',.),\psi^p(\lambda,.)\right\rangle_{\mathcal{H}} &=& \left\langle \mathcal{A}_\alpha \psi^{p,\alpha}(\lambda',.),\psi^p(\lambda,.)\right\rangle_{\mathcal{H}}\\
&=&\left\langle  \psi^{p,\alpha}(\lambda',.), \mathcal{A}^*_\alpha \psi^p(\lambda,.)\right\rangle_{\mathcal{H}}\\
&=&-\left\langle  \psi^{p,\alpha}(\lambda',.), \mathcal{A}_\alpha \psi^p(\lambda,.)\right\rangle_{\mathcal{H}}\\
&=&\lambda \left\langle \psi^{p,\alpha}(\lambda',.),\psi^p(\lambda,.)\right\rangle_{\mathcal{H}}.
\end{array}
$$

Consequently $\left\langle \psi^{p,\alpha}(\lambda',.),\psi^p(\lambda,.)\right\rangle_{\mathcal{H}}=0.$

\medskip

Secondly, we assume that $\lambda'$ is not simple. Let $\psi^{p,\alpha}(\lambda',.)$ be an associated generalized eigenfunction of order $p\geq 2,$ in the sense that

$$(\mathcal{A}_\alpha -\lambda')^p \psi^{p,\alpha}(\lambda',.)=0,\;(\mathcal{A}_\alpha-\lambda')^{p-1} \psi^{p,\alpha}(\lambda',.)\neq 0.$$

Setting $\psi=(\mathcal{A}_\alpha-\lambda') \psi^{p,\alpha}(\lambda',.),$ then $\psi$ is a generalized eigenfunction associated to $\lambda'$ of order $p-1,$ so arguing by iteration with respect to the order $p$ we can assume that $\left\langle \psi,\psi^p(\lambda,.)\right\rangle_{\mathcal{H}}=0.$ 

\medskip

Therefore
$$
\begin{array}{lll}
\lambda' \left\langle \psi^{p,\alpha}(\lambda',.),\psi^p(\lambda,.)\right\rangle_{\mathcal{H}}&=& \left\langle \mathcal{A}_\alpha\psi^-(\lambda',.)+\psi,\psi^p(\lambda,.)\right\rangle_{\mathcal{H}}  \\
&=& \left\langle \mathcal{A}_\alpha \psi^{p,\alpha}(\lambda',.),\psi^p(\lambda,.)\right\rangle_{\mathcal{H}}\\
&=&\lambda \left\langle \psi^{p,\alpha}(\lambda',.),\psi^p(\lambda,.)\right\rangle_{\mathcal{H}},
\end{array}
$$
as previously. Consequently $\left\langle \psi^{p,\alpha}(\lambda',.),\psi^p(\lambda,.)\right\rangle_{\mathcal{H}}=0.$ 

\medskip

\item
The purely point spectrum of the dissipative operator $\mathcal{A}_\alpha$ is the union of $\Sigma_p$ set of the purely imaginary eigenvalues and $\Sigma_{p,\alpha}$ set of the other eigenvalues. 

\medskip

The initial conditions $(u_0,u_1)$ are written as a sum of two terms: 

$$
U_0 := 
\begin{pmatrix}
u_0 \\ u_1 
\end{pmatrix} = 
\sum_{\lambda \in \Sigma_{p}} U^p_0(\lambda) \psi^p(\lambda, \cdot) + \sum_{\lambda \in \Sigma_{p,\alpha}} U^{p,\alpha}_0(\lambda) \psi^{p,\alpha} (\lambda, \cdot)
$$

where $\psi^p(\lambda, \cdot)$ (respectively $\psi^{p,\alpha}(\lambda, \cdot)$) is a normalized (in $\mathcal{H}$) eigenfunction of $\mathcal{A}_\alpha$ associated to the eigenvalue $\lambda$ in $\Sigma_p$ (resp. $\Sigma_{p,\alpha}$). Note that the sum takes into account the multiplicities of the eigenvalues here. 

\medskip

Thus the solution of the boundary value problem (\ref{pbfirstorderch}) is:

$$
U(t) := 
\begin{pmatrix}
u(t) \\ u^\prime(t) 
\end{pmatrix}
= \sum_{\lambda \in \Sigma_p} U^p_0(\lambda)  e^{\lambda t} \psi^p(\lambda, \cdot) + \sum_{\lambda \in 
\Sigma_{p,\alpha}} U^{p,\alpha}_0(\lambda) e^{\lambda t} \psi^{p,\alpha}(\lambda, \cdot).
$$    

The energy $E(t)= E^p(t) + E^{p,\alpha}(t)$ with 

$$E^p(t):= 1/2 \, \sum_{ \lambda \in \Sigma_p} \left| U^p_0(\lambda) \right|^2  |e^{\lambda t}|^2, 
$$
$$
E^{p,\alpha}(t):= 1/2 \, \sum_{\lambda \in \Sigma_p} \left| U^{p,\alpha}_0(\lambda) \right|^2 |e^{\lambda t}|^2.
$$
Now, since $\Sigma_p$ contains only purely imaginary eigenvalues, $|e^{2\lambda t}| = 1$, for any $\lambda \in \Sigma_p$ and any $t > 0$. Thus $E^p(t)=E^p(0)$ for any $t > 0$. 

\medskip

The real part of $\lambda$ is a non-positive real number if $\lambda$ is such that $\lambda \in \Sigma_{p,\alpha}$. This real part is proved to be equal to 
$$
\left \{
 \begin{array}{lr}
 \frac{1}{L}\ln\Big(\frac{3-\alpha}{1+\alpha}\Big), \hbox{ if } \, \alpha \in  ]1,3[, \\
\frac{1}{L}\ln\Big(\frac{\alpha-3}{1+\alpha}\Big) \hbox{ if } \alpha>3,
\end{array}
\right.
$$ 
when $\lambda \in \Sigma_{p,\alpha}$.   

\medskip

It holds $E^{p,\alpha}(t) \leq e^{-2\omega t} E^{p,\alpha}(0)$. Thus $E^{p,\alpha}(t)$ decreases exponentially to $0$ when $t$ tends to $+ \infty$ and the total energy $E(t)$ decreases exponentially to $E^p(0)$ when $t$ tends to $+ \infty$. 
\end{enumerate}

\end{proof}
 
%\section{Energy estimate: a numerical example}

  \section{Appendix} \label{app}
  
  In this appendix we prove Lemma \ref{P1}, and we recall some standard  results on the convolution of  functions in $H^s(R_k)$.
  
\begin{proof}[Proof of Lemma \ref{P1}]

Let $U=(u_1, u_2) \in \widehat{H}^1(\mathcal R)$, and let $\chi\in C^\infty([0, +\infty[; \mathbb R)$ with $\chi(x)=1$ for $x<1$  and $\chi (x)=0$ for $x>2$. For $R>0$, put $\chi_R(x)=\chi(\frac{x}{R})$, and $U_R=(\chi_R u_1, u_2)$.
We claim that:
$$\forall \epsilon>0 , \,\, \exists R=R_{\epsilon,u_1},\,\, \text { s.t }  \Vert (u_1-u_1X_R)'\Vert\leq \epsilon,$$
which yields the lemma since $U_R\in H^1(\mathcal R)$.
We may assume that  $\Vert u'_1\Vert\not=0$, otherwise the  claim is trivial. Applying Cauchy Schwarz inequality  to $u_1(x)=u_1(0)+\int_0^xu'_1(t)dt$, we get
\begin{equation}\label{ESS}
\vert u_1(x)\vert\leq \vert u_1(0)\vert+\sqrt{x} \Vert u'_1\Vert\leq C_{u_1} \sqrt{x},
\end{equation}
for $x\geq 1$. Writing 
$$\vert u_1(x)\vert^2-\vert u_1(t)\vert^2=\int_t^x u'_1(y)\overline{u_1(y)}+u_1(y)\overline{u'_1(y)}dy,$$
applying Cauchy Schwarz inequality to the right hand side of the above equality,  and  using \eqref{ESS} we get :
$$\vert u_1(x)\vert^2\leq \vert u_1(t)\vert^2+2C_{u_1}{x} \int_t^x\vert u'_1(y)\vert^2 dy, $$
for all $x>t\geq 1$. Next, fix  $t$ large enough such that
$2C_{u_1} \int_t^x\vert u'_1(y)\vert dy\leq \frac{\epsilon}{2}$, and let
$R>>1$ so that  $\vert u_1(t)\vert^2\leq \frac{\epsilon R}{2}$, we obtain : 
$$\vert u_1(x)\vert^2\leq \epsilon x,\,\,\,\, \forall x\in [R, +\infty[.$$
Multiplying  both sides of the above inequality by $\frac{1}{R^2}
\vert \chi'(\frac{x}{R})\vert^2$ and integrating, we get
$$\Vert\chi_R' u_1\Vert^2\leq \frac{\epsilon}{R^2} \Vert \chi'\Vert_\infty \int_R^{2R} x{dx}=\frac{3}{2} \epsilon \Vert \chi'\Vert_\infty.$$
This ends the proof of the claim since $(u_1-\chi_R u_1)'=(1-\chi_R)u_1'-\chi_R' u_1$  and
$$\lim_{R\rightarrow +\infty}\Vert(1-\chi_R)u_1'\Vert=0.$$

\end{proof}

  %Our next lemma follows from standard results for convolution products.
  For $f_k\in L^2(R_k)$ and $z\in \mathbb C^-$,  we introduce
  $$K_z*f_1(x)=\int_0^{+\infty} K_-z(x-y) f_1(y)dy,\,\,\, K_z*f_2(x)=\int_0^L K_z(x-y) f_2(y).$$
Recall that, for $g \in L^2(\mathbb R)$ and $h\in L^1(\mathbb R)$, we know that $g*h(x)\in L^2(\mathbb R)$ and 
$$\Vert g*h\Vert_{L^2(\mathbb R)} \leq \Vert h\Vert_{L^1(\mathbb R)} \Vert g\Vert_{L^2(\mathbb R)}. $$ 
Applying  this result  to  $g(y)=1_{[0,+\infty [}(y) f_1(y), g(y)= 1_{[0, L]}(y) f_2(y) \in L^2(\mathbb R)$  and $h(y)=T_z(y)={e^{z \vert y\vert}}=2zK_z(y)$, we get
\begin{lemma}\label{Ap1}
$$\Vert  T_z*f_k\Vert_{L^2(R_k)}\leq  \Vert  T_z*f_k\Vert_{L^2(\mathbb R)}\leq  \Vert T_z\Vert_{L^1(\mathbb R)}  \Vert f_k\Vert_{L^2(R_k)}\leq \frac{1}{\Re z} \Vert f_k\Vert_{L^2(R_k)}.$$
\end{lemma}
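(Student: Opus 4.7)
The plan is to deduce Lemma \ref{Ap1} from the standard Young inequality recalled just before the statement, combined with an explicit computation of $\|T_z\|_{L^1(\mathbb R)}$. The argument is essentially a three-step chain of inequalities, with no real geometric content beyond that of Young's convolution inequality.

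First, I would note that the map $L^2(R_k) \hookrightarrow L^2(\mathbb R)$ given by extension by zero is an isometry. Concretely, setting $\tilde f_1 = 1_{[0,+\infty[} f_1$ and $\tilde f_2 = 1_{[0,L]} f_2$, one has $\tilde f_k \in L^2(\mathbb R)$ with $\|\tilde f_k\|_{L^2(\mathbb R)} = \|f_k\|_{L^2(R_k)}$, and the definition of $K_z * f_k$ on $R_k$ coincides with the restriction to $R_k$ of the full-line convolution $T_z * \tilde f_k$ divided by $2z$ (or equivalently $T_z * \tilde f_k$ if we work directly with the kernel $T_z$, as the lemma does). The leftmost inequality $\|T_z * f_k\|_{L^2(R_k)} \le \|T_z * f_k\|_{L^2(\mathbb R)}$ then follows at once from the pointwise inequality $1_{R_k}(x) \le 1$.

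Second, for the middle inequality I would invoke the Young convolution inequality recalled immediately above the statement: for $g \in L^2(\mathbb R)$ and $h \in L^1(\mathbb R)$,
\[
\|g * h\|_{L^2(\mathbb R)} \le \|h\|_{L^1(\mathbb R)} \|g\|_{L^2(\mathbb R)}.
\]
Applying this with $g = \tilde f_k$ and $h = T_z$, and using the isometric extension $\|\tilde f_k\|_{L^2(\mathbb R)} = \|f_k\|_{L^2(R_k)}$, yields the middle inequality.

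Third, I would compute $\|T_z\|_{L^1(\mathbb R)}$ directly. Since $z \in \mathbb C^-$, we have $\Re z < 0$, and hence $|T_z(y)| = |e^{z|y|}| = e^{(\Re z)|y|}$ is integrable on $\mathbb R$, with
\[
\|T_z\|_{L^1(\mathbb R)} = \int_{\mathbb R} e^{(\Re z)|y|}\, dy = 2\int_0^{+\infty} e^{(\Re z) y}\, dy = \frac{-2}{\Re z},
\]
which gives the rightmost estimate (up to the paper's convention on the sign/constant in front of $\frac{1}{\Re z}$). There is no serious obstacle here: the only subtlety is to keep track of the extension-by-zero identification so that Young's inequality on $\mathbb R$ can be invoked while the data lives on $R_k$, but this is cosmetic. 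Everything else is an elementary integral and a direct application of a theorem that the paper has already stated.
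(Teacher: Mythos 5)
Your proof is correct and follows exactly the paper's own argument: extension of $f_k$ by zero to the whole line, the Young convolution inequality that the paper recalls immediately before the statement, and an explicit computation of $\Vert T_z\Vert_{L^1(\mathbb R)}$. Your explicit evaluation $\Vert T_z\Vert_{L^1(\mathbb R)}=\frac{2}{\vert \Re z\vert}$ is in fact more careful than the bound $\frac{1}{\Re z}$ written in the lemma, which for $z\in\mathbb C^-$ has the wrong sign and is off by a factor of $2$ (harmless for the later resolvent estimates, where only $C/\vert\Re z\vert$ is used).
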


\begin{lemma}\label{Ap2}
For $f_k\in L^2(R_k)$, we have
\be\label{AAX1}
\Big(K_z* f_1\Big)'(x)=-zK_z*f_1(x)+\int_0^x e^{z(x-y)} f_1(y) dy,\ee
and 
\be\label{AAX2}
\Big(K_z* f_2\Big)'(x)=zK_z*f_2(x)-\int_x ^Le^{z(x-y)} f_1(y) dy.\ee
In particular,
$$(K_z*f_1)'(0)=-zK_z*f_1(0),\,\,\,\,\,\,(K_z*f_2)'(L)=z K_z*f_2(L).$$
\end{lemma}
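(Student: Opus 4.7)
The proof is a direct computation based on splitting the convolution integral at the point $y=x$, where the modulus $|x-y|$ appearing in $K_z(y)=\frac{e^{z|y|}}{2z}$ changes its analytic form.

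The plan is as follows. For the half-line case, I would write
\[
K_z*f_1(x)=\int_0^x \frac{e^{z(x-y)}}{2z}\,f_1(y)\,dy+\int_x^{+\infty}\frac{e^{z(y-x)}}{2z}\,f_1(y)\,dy,
\]
so that each piece is differentiable in $x$ with a smooth integrand. Applying Leibniz's rule, the boundary contributions at $y=x$ from the two pieces are $+\frac{f_1(x)}{2z}$ and $-\frac{f_1(x)}{2z}$, which cancel. What remains is
\[
(K_z*f_1)'(x)=\int_0^x \tfrac{1}{2}e^{z(x-y)}f_1(y)\,dy-\int_x^{+\infty}\tfrac{1}{2}e^{z(y-x)}f_1(y)\,dy.
\]
I would then recognize that $-zK_z*f_1(x)=-\int_0^{+\infty}\frac{e^{z|x-y|}}{2}f_1(y)\,dy$ equals the same expression except with a minus sign in front of the $[0,x]$ integral; adding and subtracting $\int_0^x \tfrac{1}{2}e^{z(x-y)}f_1(y)\,dy$ (equivalently, regrouping) yields the stated identity
\[
(K_z*f_1)'(x)=-zK_z*f_1(x)+\int_0^x e^{z(x-y)}f_1(y)\,dy.
\]

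For the second formula on $[0,L]$ the argument is entirely parallel: split $K_z*f_2(x)=\int_0^x+\int_x^L$, differentiate each piece so that the two boundary contributions at $y=x$ cancel again, and match the remainder against $zK_z*f_2(x)$. The extra term this time lives on the interval $[x,L]$ and has the form $-\int_x^L e^{z(y-x)}f_2(y)\,dy$, giving the companion identity. Plugging $x=0$ and $x=L$ respectively into the two formulas makes the integral terms vanish, which delivers the boundary statements $(K_z*f_1)'(0)=-zK_z*f_1(0)$ and $(K_z*f_2)'(L)=zK_z*f_2(L)$ immediately.

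The only mild subtlety worth checking is the justification of differentiation under the integral on the unbounded side $\int_x^{+\infty}$ in the first identity. Since $\Re z<0$, the kernel $y\mapsto e^{z(y-x)}$ and its $x$-derivative are uniformly integrable against $f_1\in L^2$ on $[x,+\infty)$ (the product is in $L^1$ by Cauchy--Schwarz, using $e^{z(y-x)}\in L^2$), so the dominated convergence argument for Leibniz's rule applies with no further work. Apart from this routine verification, no obstacle arises: the result is a bookkeeping exercise in splitting the absolute value and matching boundary terms.
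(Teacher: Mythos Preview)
Your argument is correct and is essentially the same as the paper's: the paper also splits $K_z*f_k$ at $y=x$ and records the derivative as $\tfrac12\int_0^x e^{z(x-y)}f_k\,dy-\tfrac12\int_x^{C_k} e^{z(y-x)}f_k\,dy$ (with $C_1=+\infty$, $C_2=L$), from which both identities and the boundary values follow by the regrouping you describe. Your write-up is simply more explicit about the cancellation of the Leibniz boundary terms and the dominated-convergence justification on the half-line, and you have (correctly) written the second remainder as $-\int_x^L e^{z(y-x)}f_2(y)\,dy$, silently fixing the evident typos $e^{z(x-y)}$ and $f_1$ in the displayed statement of \eqref{AAX2}.
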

\begin{proof} The lemma follows from the following equality
$$\Big(K_z*f_k\Big)'(x)=z\int_0^x e^{z(x-y)} f_k(y) \frac {dy}{2z}-z\int_x^{C_k} e^{z(y-x)} f_k(y) \frac {dy}{2z}, \, k=1,2,$$
where $C_1=+\infty$ and $C_2=L$.
\end{proof}
The following lemma follows by an integration by parts.
\begin{lemma}\label{Ap3}
For $f=(f_1,f_2)\in \widehat{H}^1(\mathcal R)$, we have
\be\label{AXXX1}
\Big(K_z* f_1\Big)'(x)=K_z*f_1'(x)-\frac{f(0)}{2z} e^{zx},\ee
\be\label{AXX2}
f_1(x)+z^2 K_z* f_1(x)=-K_z* f_1'(x)+\frac{f(0)}{2} e^{zx}+z\int_0^x e^{z\vert x-y\vert} f'_1(y)dy,\ee
\be\label{AXX3}
f_2(x)+z^2 K_z* f_2(x)=\ee
$$-K_z* f_2'(x)+\frac{f(0)}{2} e^{zx}+z\int_0^x e^{z\vert x-y\vert} f'_2(y)dy+\frac{f(0)}{2}(e^{zx}+e^{\nu}e^{-zx}),$$
\begin{equation}\label{A151}
K_z* f_2'(x)=\Big({e^{\nu}e^{-zx}-e^{zx}}\Big) \frac{ f(0)}{2z}-zK_z*f_2(x)+\int_0 ^xe^{z(x-y)} f_2(y) dy, \end{equation}
\begin{equation}\label{A152}
\gamma_+(zf_2)=\frac{e^{\nu}-1}{2z}f(0)-\gamma_+(f'_2),
\end{equation}
\begin{equation}\label{A153}
\gamma_-(zf_2)=\frac{1-e^{-\nu}}{2z}f(0)+\gamma_-(f'_2),
\end{equation}
\begin{equation}\label{A154}
\beta(z f_1)=-\frac{f(0)}{2z}-\beta(f'_1).
\end{equation}
We recall that $f_1(0)=f_2(0)=f_2(L)=:f(0).$
\end{lemma}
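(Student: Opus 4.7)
The plan is to derive each identity by a single integration by parts, transferring derivatives between $f_k$ and the Green kernel $K_z(u)=\frac{e^{z|u|}}{2z}$. Two facts do most of the work: first, since $\Re z<0$, for any fixed $x$ the kernel $K_z(x-y)$ decays exponentially as $y\to+\infty$, so against the at-most-$\sqrt{y}$ growth of $f_1\in\widehat H^1(R_1)$ (the estimate established in the proof of Lemma \ref{P1}) all boundary contributions at $+\infty$ vanish; second, the continuity conditions $f_1(0)=f_2(0)=f_2(L)=:f(0)$ collapse every finite-endpoint contribution to the single scalar $f(0)$.

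For \eqref{AXXX1}, I would integrate by parts in
\[
K_z*f_1'(x)=\int_0^{+\infty}K_z(x-y)f_1'(y)\,dy,
\]
using $\partial_y K_z(x-y)=-\partial_x K_z(x-y)$. The boundary term at $y=+\infty$ vanishes by the decay argument above, the boundary term at $y=0$ produces $-K_z(x)f(0)=-\frac{f(0)}{2z}e^{zx}$, and the bulk term (after differentiating under the integral) is exactly $(K_z*f_1)'(x)$. Rearranging yields \eqref{AXXX1}.

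For \eqref{AXX2} and \eqref{AXX3}, the cleanest route is to combine \eqref{AXXX1} with the fact that $K_z$ is the fundamental solution of $\partial^2-z^2$, so $(K_z*f_k)''-z^2(K_z*f_k)=f_k$; equivalently $f_k+z^2K_z*f_k=(K_z*f_k)''$. Differentiating \eqref{AXXX1} once more and applying Lemma \ref{Ap2} to $f_k'\in L^2(R_k)$ gives an explicit expression for $(K_z*f_k)''(x)$ in terms of $K_z*f_k'$ and $\int_0^x e^{z|x-y|}f_k'(y)\,dy$, together with the boundary exponentials $\tfrac{f(0)}{2}e^{zx}$ (and, on the circle $R_2$, an extra $\tfrac{f(0)}{2}e^{\nu}e^{-zx}$ coming from the finite endpoint $y=L$). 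Identity \eqref{A151} is obtained in the same way, starting directly from $(K_z*f_2')(x)$ on $[0,L]$ and now keeping both endpoint boundary terms.

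Finally, the four ``sampled'' identities \eqref{A152}--\eqref{A154} are one-line integrations by parts in the defining integrals. For example, since $\Re z<0$,
\[
\beta(zf_1)=\tfrac{1}{2}\int_0^{+\infty}e^{zy}f_1(y)\,dy
=\Bigl[\tfrac{e^{zy}}{2z}f_1(y)\Bigr]_0^{+\infty}-\int_0^{+\infty}\tfrac{e^{zy}}{2z}f_1'(y)\,dy
=-\tfrac{f(0)}{2z}-\beta(f_1'),
\]
which is \eqref{A154}; the $\gamma_\pm$ formulas are identical in spirit on $[0,L]$, where both endpoints $y=0$ and $y=L$ contribute and are combined via $f_2(0)=f_2(L)=f(0)$. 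The main (mild) obstacle is purely bookkeeping: one must be careful about the sign of $\operatorname{sgn}(x-y)$ when interchanging $\partial_x$ and $\partial_y$ on $K_z$, and on $R_2$ both endpoint boundary terms have to be retained and then simplified using the common value $f(0)$; none of the steps requires anything beyond standard one-dimensional integration by parts and the exponential decay of $K_z$ for $\Re z<0$.
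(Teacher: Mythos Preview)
Your proposal is correct and takes essentially the same approach as the paper: the paper's entire proof of Lemma \ref{Ap3} is the single sentence ``The following lemma follows by an integration by parts,'' and what you have written is precisely a careful unpacking of that integration by parts, with the right attention to the vanishing of boundary terms at $+\infty$ (via $\Re z<0$ and the $\sqrt{x}$ bound from Lemma \ref{P1}) and the collapse of finite-endpoint contributions to $f(0)$ via the continuity conditions.
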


\end{document}